\newtheorem{definition}{Definition}
\newtheorem{theorem}{Theorem}
\newtheorem{Proposition}{Proposition}
\newtheorem{lemma}{Lemma}
\newtheorem{Remark}{Remark}
\newtheorem{Problem}{Problem}
\begin{document}

\begin{frontmatter}
%\runtitle{Insert a suggested running title}  % Running title for regular 
                                              % papers but only if the title  
                                              % is over 5 words. Running title 
                                              % is not shown in output.

\title{\thanksref{footnoteinfo}} % Title, preferably not more 
                                                % than 10 words.
%$p$th Moment Stability of Discrete-Time Markov Jump Systems by Extended System Method

\thanks[footnoteinfo]{This paper was not presented at any IFAC 
meeting.\\ $*$ Corresponding author.}

\author[Qingdao,Zibo]{Jing Guo}\ead{guojing8299@163.com},          % Add the 
\author[East]{Xiushan Jiang}\ead{ x\_sjiang@163.com},	        % e-mail address 
\author[Qingdao]{Weihai Zhang}$^{,*}$\ead{w\_hzhang@163.com}       % (ead) as shown

\address[Qingdao]{College of Electrical Engineering and Automation, Shandong University of Science and Technology, Qingdao 266590, China}                                               % Please supply                                              
\address[Zibo]{School of Mathematics and Statistics,
	Shandong University of Technology, Zibo 250049,
	China} 
\address[East]{College of New Energy, China University of Petroleum (East China), Qingdao 266580, China}                                                    % full addresses
% is not shown in output.
          
\begin{keyword}                           % Five to ten keywords,  
Stochastic $H_{\infty}$ control; Reinforcement
learning; Generalized algebraic Riccati equation; Model-free design; Off-policy learning.              % chosen from the IFAC 
\end{keyword}                             % keyword list or with the 
                                          % help of the Automatica 
                                          % keyword wizard

\begin{abstract}                          % Abstract of not more than 200 words.
	The stochastic $H_{\infty}$ control  is studied for a linear stochastic It\^o system with an unknown system model. The linear stochastic $H_{\infty}$ control issue is known to be transformable into the problem of solving a so-called generalized algebraic Riccati equation (GARE), which is a nonlinear equation that is typically difficult to solve analytically. Worse, model-based techniques cannot be utilized to approximately solve a GARE when an accurate system model is unavailable or prohibitively expensive to construct in reality. To address these issues, an off-policy reinforcement learning (RL) approach is presented to learn the solution of a GARE from real system data rather than a  system model; its convergence is demonstrated, and the robustness of RL to   errors in the learning process is investigated.
	In the off-policy RL approach, the system data may be created with  behavior policies  rather than the target policies, which is highly significant and promising for  use in actual systems. Finally, the proposed off-policy RL approach is validated on a stochastic linear F-16 aircraft system.
\end{abstract}

\end{frontmatter}

\section{Introduction}
Reinforcement learning (RL), which has its roots in animal learning psychology, is a method
that learns via trial-and-error and initially gained much interest in the field of artificial
intelligence. The link between RL approaches and control domains was established by Sutton
\cite{sutton1999reinforcement}. In the field of control, RL method refers to a controller's interaction with dynamical system, with the goal of learning optimal control policies by monitoring certain performance index,  without
the full knowledge of system dynamics \cite{sutton1999reinforcement,bertsekas2019reinforcement}.  Many RL approaches have been presented during the last decades for various optimal control problems associated with various dynamical systems, as discussed in  a recent  survey \cite{kiumarsi2017optimal}. The majority of existing RL approaches, however, are presented for stochastic discrete-time systems represented by Markov decision processes or deterministic continuous-time systems governed by ordinary differential equations. There are few results for stochastic continuous-time systems governed by stochastic differential equations, which are important in the
modeling of stochastic uncertainties in actual  systems. For stochastic optimal control problems, RL
approaches have been successfully applied. In \cite{bian2016adaptive}, the optimal control issue was addressed for a class of continuous-time stochastic
systems perturbed by multiplicative noise, and robust optimality
analysis was conducted.  \cite{li2022stochastic} presented an
online RL algorithm to solve infinite horizon continuous-time stochastic linear
quadratic problems  with partial system
information. \cite{pang2022reinforcement} proposed a novel off-policy  RL algorithm that can determine near-optimal policies for an optimal stationary control problem directly from data.  \cite{wei2023continuous} developed a new RL-based  method to solve optimal control problem for nonlinear systems with  stochastic nonlinear disturbances.

$H_{\infty}$ control is one of the most significant robust control
approaches and has received much attention in the last forty years, see \cite{zames1981feedback,doyal1989state,van19922,bacsar1995h,damm2002state}. $H_{\infty}$ control is used to attenuate the effects of external disturbances on the outputs, which is mathematically represented by the $H_{\infty}$ norm,  below a given disturbance attenuation level. In practice, systems are subject to various random noises both internally and externally. The uncertainty of system parameters is usually modeled as multiplicative noise, while some external perturbation is modeled as additive noise. In the framework of stochastic systems, the $H_{\infty}$ norm is exactly the  $\mathcal{L}_{2}$-induced norm of the input-output perturbation  operator with initial state zero \cite{hinrichsen1998stochastic}. For continuous-time linear systems, 
finding the solutions to the deterministic and stochastic $H_{\infty}$ control problems leads to solving the algebraic Riccati
equation (ARE) and the generalized algebraic Riccati equation (GARE), respectively. The first challenge in numerically solving  $H_{\infty}$ control  problem is that the quadratic terms in ARE and GARE are indefinite, which results in Kleinman's algorithm commonly used in solving optimal control problems no longer being applicable. \cite{lanzon2008computing} suggested an iterative technique for
solving ARE, in which the  ARE with an indefinite sign of the quadratic term is  transformed into a sequence of
AREs that can be solved by Kleinman's algorithm. 
\cite{feng2010iterative} and \cite{dragan2011computation} further expanded the approach in
\cite{lanzon2008computing} to solve stochastic $H_{\infty}$ control issue. 
\cite{wu2013simultaneous} provided a highly computationally efficient simultaneous policy
update (SPU) algorithm, in which the control and disturbance policies are updated simultaneously, and
they developed offline and online versions, which are model-based and partially model-free approaches,
respectively, to improve efficiency. However, the approaches described above mostly are intended for
continuous-time deterministic $H_{\infty}$ control, and the majority of them are model-based or partially
model-free; little study has been done on model-free algorithms for continuous-time stochastic $H_{\infty}$
control.

Compared to deterministic systems or systems with additive noise, systems with multiplicative noise are more enriching, and they are closely related to many complex systems that are difficult to model. The goal of this study is to give a novel approach to solve the GARE arising in 
stochastic $H_{\infty}$ control with state-dependent multiplicative noise. Unlike AREs arising from deterministic $H_{\infty}$ control, the
GAREs coming from stochastic $H_{\infty}$ control  have an extra linear disturbance term connected to the state-dependent multiplicative noise coefficient matrix. Because of the
extra disturbance term, a GARE is often more challenging to address than an ARE.
Based on the SPU algorithm proposed in \cite{wu2013simultaneous}, we design a model-based
SPU algorithm  to solve this GARE, which is shown to be a Newton's
algorithm, and then propose an off-policy RL algorithm  for solving a
GARE without knowing all of the system's information in advance. The following are the primary
contributions of this study.

1) The convergence of model-based SPU algorithm is proven using Kantorovich's Theorem by
proving that it is equivalent to Newton's algorithm, and model-based  algorithm is
shown to have local stepwise stabilizability and a local quadratic convergence rate using the mean square stable spectral criterion and the $\mathscr{H}$-representation technique.

2) RL algorithm is an off-policy method that does not need specified updates of the
disturbance policy, which is sensible in actual situations. It is also a model-free method
that does not require complete system knowledge. Furthermore, we provide a formal mathematical
demonstration of RL algorithm's convergence under the rank condition without considering errors due
to random noise. In addition, we investigate the robustness of the off-policy method to
errors in the learning process in the context of bias owing to random noise. This contrasts with the
robustness of controllers learned by RL to dynamic perturbations in the system \cite{bian2016adaptive}.
It can be shown that if the procedure begins with a solution that is near the optimal
one and the errors are small enough, the differences between the solutions obtained by the
algorithm and the true solution will be small and bounded as well.
%\cite{dragan2006mathematical}

The remainder of the paper is structured as follows: Section 2 describes the problem and the
fundamental conclusions for the stochastic $H_{\infty}$ control problem. Sections 3 and 4 
present the model-based SPU algorithm and the off-policy RL approach for addressing the $H_{\infty}$
control  for a linear stochastic It\^o system, respectively. Rigorous mathematical
proof
of convergence is also provided. Section 5  analyses the robustness of the off-policy method to
errors. Section 6 includes a simulation example to demonstrate the effectiveness of the suggested method, and
Section 7 concludes this paper with reviewing conclusions. \par
Notations: $\mathcal{R}_{+}\left(\mathcal{Z}_{+}\right) $ is the set of nonnegative real numbers (integers); $\mathcal{C}_{-}$ is the open left-hand side of the complex plane. $I_{n}$ denotes the identity matrix in $\mathcal{R}^{n \times n}$ while $0$ denotes the zero vector or matrix with the appropriate dimension; $\left\|\cdot\right\| _{2}$ denotes the Euclidean
norm for vectors and the spectral norm for matrices.  $\mathcal{S}^{n}$ and $\mathcal{S}^{n}_{+}$   are the sets of all $n \times n$ symmetric and symmetric positive semidefinite matrices respectively. For $X \in \mathcal{S}^{n}, Y \in \mathcal{S}^{n}$, $X \succeq Y$  denotes $X-Y\in \mathcal{S}^{n}_{+}$. For $H\in \mathcal{S}^{n}$, define $\operatorname{vecs}( H)=\left[h_{11}, \sqrt{2} h_{12}, \ldots, \sqrt{2} h_{1 n}, h_{22}, \sqrt{2} h_{23}, \ldots, \sqrt{2} h_{n-1, n}, h_{n n}\right]^{\top}$, where $h_{ik}$ is the $(i, k)$th element of matrix $H$. For vector $x \in \mathcal{R}^{n}$, define an operator $\tilde{x}=\operatorname{vecs}\left( xx^{\top}\right)\in \mathcal{R}^{\frac{n(n+1)}{2}}$.  For stochastic processes 
$\left\{x\left( t\right) \right\}_{t \in \mathcal{R}_{+}}$ which is defined on the complete probability 
space $\left(\Omega, \mathcal{F},\mathbb{P}\right)$, the $\mathcal{L}_{2}$-norm of $\left\{x\left( t\right) \right\}_{t \in \mathcal{R}_{+}}$ is defined as $\|x(\cdot)\|_{\mathcal{L}_{2}}=\left( \mathbb{E} \int_{0}^{\infty}\left( x(t)^{\top}x(t)\right) \mathrm{d} t\right)^{1 / 2}$. $\mathcal{L}_{\mathcal{F}}^{2}\left(\mathcal{R}_{+}; \mathcal{R}^{n}\right)$ denotes the space of nonanticipative $\mathcal{R}^{n}$-valued stochastic processes
$\left\{x\left( t\right) \right\}_{t \in \mathcal{R}_{+}}$ defined on the probability 
space $\left(\Omega, \mathcal{F},\mathbb{P}\right)$ with respect to an increasing $\sigma$- algebra $\left\{\mathcal{F}_{t}\right\}_{t \geq 0}$ satisfying $\mathbb{E} \int_{0}^{\infty}\left( x(t)^{\top}x(t)\right)  \mathrm{d} t<\infty$.
\section{Problem formulation and preliminaries}
In this section, we  describe the problem and highlight some preliminary results.
\subsection{Problem formulation }
Consider the following linear stochastic It\^o system with state-dependent noise:
\begin{align}
	\mathrm{d} x\left( t\right)& =\left(A x\left( t\right)+B u\left( t\right)+E v\left( t\right) \right) \mathrm{d} t+A_{1} x\left( t\right) \mathrm{d} W\left( t\right),\label{e1}\\
	z\left( t\right) &=\left[\begin{array}{c}
		C x\left( t\right)  \\
		D u\left( t\right) 
	\end{array}\right], \label{e2}
\end{align}
where $W(t)$ is assumed to be a one-dimensional
standard Brownian motion defined on the filtered probability
space $\left(\Omega, \mathcal{F},\left\{\mathcal{F}_{t}\right\}_{t \geq 0}, \mathbb{P}\right)$ with $\mathcal{F}_{t}=\sigma(W(s): 0 \leq s \leq t)$, $x\left( t\right)  \in \mathcal{R}^{n}$, $u\left( t\right)\in \mathcal{R}^{m}$, $v\left( t\right) \in \mathcal{R}^{p}$ and $z\left( t\right)\in \mathcal{R}^{q}$ are $\left\{\mathcal{F}_{t}\right\}$- adapted stochastic processes
representing the system state, the control input, the external disturbance and the controlled output, respectively.
 Assume that  the initial state $x(0)=x_{0}$ is  deterministic and that all coefficients $A$, $A_{1} $, $B$, $E$, $C$ and $D$ are  constant real matrices with  appropriate dimensions.    Denote $Q=C^{\top}C$ and $R=D^{\top}D$ in the sequel for brevity and assume throughout that $D$ has full column rank to ensure that $R$ is positive definite. 

For arbitrary $T$, $0<T<\infty$, when $\left(u, v, x_{0}\right) \in  \mathcal{L}_{\mathcal{F}}^{2}\left([0, T]; \mathcal{R}^{m}\right) \times \mathcal{L}_{\mathcal{F}}^{2}\left([0, T]; \mathcal{R}^{p}\right) \times \mathcal{R}^{n}$, there exists a unique strong solution $x(t)$ or, for
clarity, $x(t,u, v, x_{0}) \in \mathcal{L}_{\mathcal{F}}^{2}\left([0, T]; \mathcal{R}^{n}\right) $ to (\ref{e1}) with $x(0)=x_{0}$ \cite{krylov1995introduction}. We next give the formulation of stochastic $H_{\infty}$ control problem.
\begin{Problem}\rm \cite{chen2004stochastic}\par
	Find a  state feedback  control $u^{*}= -L^{*}x(t)  \in \mathcal{L}_{\mathcal{F}}^{2}\left(\mathcal{R}_{+}; \mathcal{R}^{m}\right)$, such that
	\begin{itemize} 
		\item[(\romannumeral1)] $u^{*}$ stabilizes  system~(\ref{e1}) internally, i.e,
		\begin{equation*}
			\lim _{\substack{{t \rightarrow \infty}}} \mathbb{E}\| x(t,u^{*},0,x_0)\|^2 =0.	
		\end{equation*}
		\item[(\romannumeral2)] $\|\mathscr{T}\|_{\infty} < \gamma$ for a given disturbance attenuation $\gamma>0$, where 
		\begin{equation*}
			\|\mathscr{T}\|_{\infty} 
			:= \sup _{\substack{v \in \mathcal{L}_{\mathcal{F}}^{2}\left(\mathcal{R}_{+};\mathcal{R}^{q} \right),
					v \neq 0}} \frac{\| \mathscr{T}\left( v\right) \|_{\mathcal{L}_{2}}}{\|v\|_{\mathcal{L}_{2}}}
		\end{equation*}
		with $\left( \mathscr{T}\left( v\right) \right) \left( t\right):=\left[\begin{array}{c}
			C x\left(t, u^{*}, v, 0\right) \\
			D u^{*} 
		\end{array}\right].$
	\end{itemize} 
\end{Problem}
If a control $u= -Lx(t)  \in \mathcal{L}_{\mathcal{F}}^{2}\left(\mathcal{R}_{+}; \mathcal{R}^{m}\right)$ only satisfies~(\romannumeral1), then this  admissible control $u(\cdot)$ is referred to as a (mean square) internally
stabilizing feedback control, $L$ is referred to as a internally
stabilization gain of system~(\ref{e1}).

\subsection{Preliminaries on stochastic $H_{\infty}$ control}
The following  lemmas and  proposition will be utilized frequently in this paper, which are listed below.  We begin by  providing the following  lemma, which establishes the criterion  for the asymptotic mean square  stability of a stochastic system.

\begin{lemma}\rm \label{lem1} \cite{zhang2004stabilizability}
	The system 
	\begin{equation}\label{lems}
			\mathrm{d} x\left( t\right) =A x\left( t\right) \mathrm{d} t+A_{1} x\left( t\right) \mathrm{d} W\left( t\right)
		\end{equation} 
	is asymptotically mean
	square stable if and only if $\sigma\left( \mathscr{L}_{A,A_{1}}\right) \subset \mathcal{C}_{-}$, where
	the generalized Lyapunov operator $\mathscr{L}_{A,A_{1}}$ is defined by
	\begin{equation*}
	 \mathscr{L}_{A,A_{1}}(X)=	X A+A^{\top} X+A_{1}^{\top} X A_{1}
		\end{equation*}
	and the spectral set of $\mathscr{L}_{A,A_{1}}$ is given by
	\begin{equation*}
		\sigma\left( \mathscr{L}_{A,A_{1}}\right):=\left\lbrace\lambda\Big| \mathscr{L}_{A,A_{1}}\left( X\right) =\lambda X, X \neq0,X \in \mathcal{S}^{n}\right\rbrace.
	\end{equation*}		
\end{lemma}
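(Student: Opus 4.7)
The plan is to reduce the asymptotic mean square stability of \eqref{lems} to a deterministic stability question for a linear ODE on the finite-dimensional space $\mathcal{S}^n$, whose generator is the adjoint of $\mathscr{L}_{A,A_1}$. The key observation is that the second moment matrix $P(t):=\mathbb{E}[x(t)x(t)^{\top}]$ satisfies a closed linear ODE on $\mathcal{S}^n$, and mean square stability of \eqref{lems} is equivalent to $P(t)\to 0$ for every initial condition of the form $P(0)=x_0 x_0^{\top}$.

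First I would apply It\^o's formula to $V(x)=x^{\top} X x$ for arbitrary $X\in\mathcal{S}^n$; taking expectation kills the martingale part and, after using that the generator of \eqref{lems} acting on $V$ is exactly $x^{\top}\mathscr{L}_{A,A_1}(X)x$, leaves
$$\frac{d}{dt}\operatorname{tr}\!\bigl(X P(t)\bigr)=\operatorname{tr}\!\bigl(\mathscr{L}_{A,A_1}(X) P(t)\bigr)=\operatorname{tr}\!\bigl(X \mathscr{L}^{*}_{A,A_1}(P(t))\bigr).$$
Varying $X$ over a basis of $\mathcal{S}^n$ with respect to the trace inner product yields the matrix ODE $\dot P(t)=\mathscr{L}^{*}_{A,A_1}(P(t))=AP(t)+P(t)A^{\top}+A_{1}P(t)A_{1}^{\top}$. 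Because $P(t)\succeq 0$ and $\mathbb{E}\|x(t)\|^{2}=\operatorname{tr}(P(t))$, the SDE \eqref{lems} is asymptotically mean square stable for every deterministic $x_{0}$ if and only if $P(t)\to 0$ for every rank-one $P(0)$; by linearity of the flow and the fact that every element of $\mathcal{S}^{n}_{+}$ is a sum of rank-one positive semidefinite matrices, this is in turn equivalent to asymptotic stability of the full linear ODE on $\mathcal{S}^{n}$.

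The final step is the deterministic spectral criterion: a linear ODE on a finite-dimensional space is asymptotically stable if and only if every eigenvalue of its generator lies in $\mathcal{C}_{-}$, so stability is equivalent to $\sigma(\mathscr{L}^{*}_{A,A_1})\subset\mathcal{C}_{-}$. Since $\mathscr{L}^{*}_{A,A_1}$ and $\mathscr{L}_{A,A_1}$ are adjoints on the finite-dimensional Euclidean space $\mathcal{S}^{n}$, they share the same spectrum, yielding the stated criterion $\sigma(\mathscr{L}_{A,A_1})\subset\mathcal{C}_{-}$. The main obstacle will be interpreting the displayed definition of $\sigma(\mathscr{L}_{A,A_1})$, which as written restricts eigenvectors to real symmetric matrices: one must either complexify the operator (allowing Hermitian $X$ and complex $\lambda$), invoking that complex eigenvalues of a real operator come in conjugate pairs so no unstable mode is missed, or, equivalently, apply the $\mathscr{H}$-representation the paper uses later to rewrite $\mathscr{L}_{A,A_1}$ as a concrete real matrix acting on $\mathcal{R}^{n(n+1)/2}$ via $\operatorname{vecs}(\cdot)$, whose ordinary spectrum is exactly the one governing stability of the matrix ODE.
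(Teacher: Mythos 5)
The paper offers no proof of this lemma at all: it is imported verbatim from \cite{zhang2004stabilizability} as a known result, so there is nothing internal to compare against. Your reconstruction is the standard argument behind that citation and is essentially correct: passing to the second-moment matrix $P(t)=\mathbb{E}[x(t)x(t)^{\top}]$ via It\^o's formula, obtaining the linear flow $\dot P=AP+PA^{\top}+A_{1}PA_{1}^{\top}$ generated by the adjoint of $\mathscr{L}_{A,A_{1}}$, reducing mean square stability to asymptotic stability of that flow on $\mathcal{S}^{n}$ (using positivity and the rank-one decomposition of $\mathcal{S}^{n}_{+}$, plus the fact that every symmetric matrix is a difference of positive semidefinite ones), and invoking the finite-dimensional spectral criterion together with the equality of the spectra of an operator and its adjoint. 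You also correctly flag the one genuine subtlety — that the displayed definition of $\sigma(\mathscr{L}_{A,A_{1}})$ with real symmetric eigenvectors only yields real eigenvalues literally, so one must complexify or pass to the $\mathscr{H}$-representation matrix acting on $\mathcal{R}^{n(n+1)/2}$ — and the resolution you propose is exactly how the cited reference (and this paper's own Lemma~2 machinery) handles it. No gaps.
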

  In what follows, we give the  expression for the unique solution  to generalized Lyapunov equation (GLE):
 	\begin{equation}\label{gle}
 		\mathscr{L}_{A,A_{1}}\left( X\right)=X A+A^{\top} X+A_{1}^{\top} X A_{1}=Y, 
 	\end{equation}
  in Lemma~\ref{lem2} below. To this end, define an $\mathscr{H}$-representation matrix $H_{n}$ as
  \begin{equation*}
  	\begin{split}
  		H_{n} =& \left[  \operatorname{vec}(E_{11}), \cdots, \operatorname{vec}(E_{1n}), \operatorname{vec}(E_{22}), \cdots, \operatorname{vec}(E_{2n}),\right.\\
  		&\left.\cdots, \operatorname{vec}(E_{nn})\right],
  	\end{split} 
  \end{equation*}
  where $E_{ij} = (e_{lk})_{n \times n}$ with $e_{ij} = e_{ji} = 1$ and all other entries being zero. Then  define 
  \begin{equation*}	
  	\begin{split}
  		\mathcal{H}\left(A,A_{1},H_{n}\right):=&\left( H_{n}^{\top} H_{n}\right) ^{-1}H_{n}^{\top} \left( A  \otimes I_{n} +I_{n} \otimes A\right.\\
  		&\left.+A_{1}\otimes A_{1}\right)^{\top}H_{n}.
  	\end{split} 
  \end{equation*}
Based on  Lemma~\ref{lem1} above and Theorem~3.1 in  \cite{zhang2012cal}, which gives the criterion for the existence of a unique solution to GLE~(\ref{gle}),  we can  obtain the following
\begin{lemma}\rm \label{lem2}  
	If  system~(\ref{lems}) is asymptotically mean
square stable, then $\mathcal{H}\left(A,A_{1},H_{n}\right)$ is invertible 
	and for $Y \in \mathcal{S}^{n}$, the unique solution to GLE (\ref{gle}) is
	\begin{equation*}
		X=\left( \mathscr{L}_{A,A_{1}}\right)^{-1} \left( Y\right)=\operatorname{vecs}^{-1}\left( \mathcal{H}\left(A,A_{1},H_{n}\right) ^{-1}\operatorname{vecs}\left( Y\right) \right).
	\end{equation*} 
\end{lemma}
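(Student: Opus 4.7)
The plan is to convert the generalized Lyapunov equation~(\ref{gle}) into an equivalent linear system on $\mathcal{R}^{n(n+1)/2}$ via Kronecker vectorization combined with the $\mathscr{H}$-representation, and then to deduce invertibility of $\mathcal{H}(A,A_{1},H_{n})$ from asymptotic mean square stability by invoking Lemma~\ref{lem1}.

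First, I will apply the $\operatorname{vec}$ operator to both sides of (\ref{gle}) and use the identity $\operatorname{vec}(PQR)=(R^{\top}\otimes P)\operatorname{vec}(Q)$ to obtain
\begin{equation*}
(A\otimes I_{n}+I_{n}\otimes A+A_{1}\otimes A_{1})^{\top}\operatorname{vec}(X)=\operatorname{vec}(Y).
\end{equation*}
Since $\{E_{ij}\}_{i\le j}$ is a basis of $\mathcal{S}^{n}$, the columns $\operatorname{vec}(E_{ij})$ of $H_{n}$ span $\operatorname{vec}(\mathcal{S}^{n})$ and $H_{n}$ has full column rank, so $H_{n}^{\top}H_{n}$ is invertible. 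The defining relation of the $\mathscr{H}$-representation then gives $\operatorname{vec}(X)=H_{n}\operatorname{vecs}(X)$ for every $X\in\mathcal{S}^{n}$, and analogously for $Y$. Substituting and left-multiplying by $(H_{n}^{\top}H_{n})^{-1}H_{n}^{\top}$ collapses the vectorized GLE to
\begin{equation*}
\mathcal{H}(A,A_{1},H_{n})\,\operatorname{vecs}(X)=\operatorname{vecs}(Y).
\end{equation*}

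The key observation is that $X\mapsto\operatorname{vecs}(X)$ is a linear bijection between $\mathcal{S}^{n}$ and $\mathcal{R}^{n(n+1)/2}$, so $\mathcal{H}(A,A_{1},H_{n})$ is precisely the matrix of $\mathscr{L}_{A,A_{1}}$ in these coordinates. Consequently $\mathcal{H}(A,A_{1},H_{n})$ is invertible if and only if $\mathscr{L}_{A,A_{1}}$ is. By Lemma~\ref{lem1}, asymptotic mean square stability is equivalent to $\sigma(\mathscr{L}_{A,A_{1}})\subset\mathcal{C}_{-}$, which in particular rules out the eigenvalue $0$, so $\mathscr{L}_{A,A_{1}}$ is invertible on $\mathcal{S}^{n}$ and hence so is $\mathcal{H}(A,A_{1},H_{n})$. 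Inverting and applying $\operatorname{vecs}^{-1}$ then yields
\begin{equation*}
X=\operatorname{vecs}^{-1}\!\left(\mathcal{H}(A,A_{1},H_{n})^{-1}\operatorname{vecs}(Y)\right),
\end{equation*}
and uniqueness of the solution is inherited from the bijectivity of $\operatorname{vecs}$ together with the invertibility just established.

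The main obstacle I expect is the bookkeeping in the reduction step: one has to check carefully that the particular normalization of $\operatorname{vecs}$ (the $\sqrt{2}$ off-diagonal factors) is compatible with $H_{n}$ built from the $\{E_{ij}\}$ basis, so that the identity $\operatorname{vec}(X)=H_{n}\operatorname{vecs}(X)$ really holds and $(H_{n}^{\top}H_{n})^{-1}H_{n}^{\top}$ acts as the identity on $\operatorname{vec}(\mathcal{S}^{n})$, ensuring no information about $Y$ is discarded when passing to the reduced equation. Once this algebraic identification is verified, consistent with Theorem~3.1 of \cite{zhang2012cal}, the remainder is routine linear algebra combined with the spectral criterion of Lemma~\ref{lem1}.
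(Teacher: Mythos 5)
Your proof is correct and follows essentially the same route as the paper, which simply invokes Lemma~\ref{lem1} together with Theorem~3.1 of \cite{zhang2012cal}: the $\mathscr{H}$-representation realizes $\mathscr{L}_{A,A_{1}}$ restricted to $\mathcal{S}^{n}$ as the matrix $\mathcal{H}(A,A_{1},H_{n})$, whose invertibility then follows because the spectral criterion of Lemma~\ref{lem1} excludes the eigenvalue zero. The caveat you flag at the end is real but harmless: with the paper's $\sqrt{2}$-weighted $\operatorname{vecs}$ one actually has $\operatorname{vec}(X)=H_{n}D^{-1}\operatorname{vecs}(X)$ for a fixed invertible diagonal matrix $D$ (so $\mathcal{H}$ is replaced by its conjugate $D\mathcal{H}D^{-1}$), which affects neither invertibility nor uniqueness and reduces to your identity for the unweighted half-vectorization used in \cite{zhang2012cal}.
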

As can be directly verified, we have the following proposition, which is useful when taking the norm of a matrix in the sequel.
\begin{Proposition}\rm \label{pro:1} \cite{pang2021robust}
	For $M \in \mathcal{R}^{n \times n}$, $N \in \mathcal{R}^{n \times m}$, $\Delta M \in \mathcal{R}^{n \times n}$  and $\Delta N \in \mathcal{R}^{n \times m}$,  if $M$ and  $M+\Delta M$ are invertible,  then
	\begin{equation*}
		\begin{split}
			&\left\|M^{-1}N-\left( M+\Delta M\right)^{-1} \left( N+\Delta N\right)\right\|_{F} \leq \left\|M^{-1}\right\|_{F}\\
			&\left( \left\|\Delta N\right\|_{F}+ \left\|\left( M+\Delta M\right)^{-1}\right\|_{F}\left\|N+\Delta N\right\|_{F}\left\|\Delta M\right\|_{F}\right).
		\end{split}
	\end{equation*}	
\end{Proposition}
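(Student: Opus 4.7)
The plan is to prove the inequality by a standard add-and-subtract trick together with the well-known matrix identity for the difference of two inverses, followed by the triangle inequality and submultiplicativity of the Frobenius norm.

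First, I would rewrite the difference as
\begin{equation*}
M^{-1}N-(M+\Delta M)^{-1}(N+\Delta N)=\bigl[M^{-1}N-M^{-1}(N+\Delta N)\bigr]+\bigl[M^{-1}(N+\Delta N)-(M+\Delta M)^{-1}(N+\Delta N)\bigr],
\end{equation*}
so that the first bracket collapses to $-M^{-1}\Delta N$ and the second bracket contains the factor $M^{-1}-(M+\Delta M)^{-1}$ acting on $N+\Delta N$.

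Next I would invoke the classical identity
\begin{equation*}
M^{-1}-(M+\Delta M)^{-1}=M^{-1}\bigl[(M+\Delta M)-M\bigr](M+\Delta M)^{-1}=M^{-1}\,\Delta M\,(M+\Delta M)^{-1},
\end{equation*}
which is legitimate by the hypothesis that both $M$ and $M+\Delta M$ are invertible. Substituting this expression back yields the single compact formula
\begin{equation*}
M^{-1}N-(M+\Delta M)^{-1}(N+\Delta N)=-M^{-1}\Delta N+M^{-1}\,\Delta M\,(M+\Delta M)^{-1}(N+\Delta N).
\end{equation*}

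Finally, I would take Frobenius norms, factor out $\|M^{-1}\|_{F}$, apply the triangle inequality to split the two summands, and apply submultiplicativity $\|XY\|_{F}\leq\|X\|_{F}\|Y\|_{F}$ termwise. This gives exactly the stated bound
\begin{equation*}
\|M^{-1}\|_{F}\bigl(\|\Delta N\|_{F}+\|(M+\Delta M)^{-1}\|_{F}\|N+\Delta N\|_{F}\|\Delta M\|_{F}\bigr).
\end{equation*}
There is no real obstacle here; the only care needed is to pick the correct add-and-subtract term so that the two resulting pieces can both be pulled out with a common $M^{-1}$ factor on the left, which is what makes $\|M^{-1}\|_{F}$ (rather than $\|(M+\Delta M)^{-1}\|_{F}$) appear as the outside constant in the final bound.
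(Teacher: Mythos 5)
Your proof is correct and is exactly the ``direct verification'' the paper alludes to: the paper itself gives no proof of Proposition~1, merely citing \cite{pang2021robust}, and your add-and-subtract decomposition combined with the resolvent identity $M^{-1}-(M+\Delta M)^{-1}=M^{-1}\,\Delta M\,(M+\Delta M)^{-1}$ and submultiplicativity of the Frobenius norm yields the stated bound verbatim. Nothing further is needed.
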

For stochastic $H_{\infty}$ control problem of system~(\ref{e1}), there is the following important theoretical result, which serves as the theoretical basis for the following work.
\begin{lemma}\rm  \label{llj} \cite{zhang2017stochastic}
	   Assume that  $(A, A_{1}|C)$ and $(A +\gamma^{-2} E E^{\top} P^{*}, A_{1} |C)$ are exactly detectable and that system~(\ref{e1}) is  internally stabilizable. Then $u^{*}(t) = -L^{*}x(t) = - R^{-1} B^{\top} P^{*} x(t)$ is an $H_{\infty}$ control and $v^{*}(t) = -F^{*}x(t) =\gamma^{-2}  E^{\top} P^{*}  x(t)$ is the corresponding worst-case disturbance, where $P^{*} \succeq 0$  is the unique stabilizing solution to generalized algebraic Riccati equation (GARE)
	 \begin{equation}\label{gare}
	 	\begin{split}
	 		&P A+A^{\top} P+A_{1}^{\top} P A_{1}-P B R^{-1} B^{\top} P\\
	 		&+\gamma^{-2}P E E^{\top} P+Q=0.
	 	\end{split}
	 \end{equation} 
\end{lemma}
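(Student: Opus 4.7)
My strategy is the stochastic completion-of-squares argument (a dynamic-game, storage-function version of the Bounded Real Lemma) with $V(x)=x^{\top}P^{*}x$. I first apply It\^o's formula to $V(x(t))$ along an arbitrary solution of~(\ref{e1}) driven by an admissible pair $(u,v)$; the drift term in $\mathrm{d} V$ equals $x^{\top}(P^{*}A+A^{\top}P^{*}+A_{1}^{\top}P^{*}A_{1})x+2x^{\top}P^{*}Bu+2x^{\top}P^{*}Ev$, and substituting the GARE~(\ref{gare}) for $P^{*}A+A^{\top}P^{*}+A_{1}^{\top}P^{*}A_{1}$ and completing the squares around the candidate saddle-point pair $(u^{*},v^{*})$ produce the key dissipation identity
\begin{align*}
\mathbb{E}\!\int_{0}^{T}\!\bigl(\|z\|^{2}-\gamma^{2}\|v\|^{2}\bigr)\mathrm{d} t &=V(x_{0})-\mathbb{E}\,V(x(T))\\
&\quad+\mathbb{E}\!\int_{0}^{T}(u-u^{*})^{\top}R(u-u^{*})\,\mathrm{d} t\\
&\quad-\gamma^{2}\mathbb{E}\!\int_{0}^{T}\|v-v^{*}\|^{2}\,\mathrm{d} t.
\end{align*}

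For condition (\romannumeral1), I set $v=0$ and $u=u^{*}$. It\^o's formula combined with the GARE yields $\frac{\mathrm{d}}{\mathrm{d} t}\mathbb{E}V=-\mathbb{E}\,x^{\top}(Q+P^{*}BR^{-1}B^{\top}P^{*}+\gamma^{-2}P^{*}EE^{\top}P^{*})x\le 0$, so $\mathbb{E}V(x(t))$ is nonnegative and monotone decreasing; the exact-detectability hypothesis on $(A,A_{1}|C)$ together with the spectral criterion of Lemma~\ref{lem1} then force $\mathbb{E}\|x(t,u^{*},0,x_{0})\|^{2}\to 0$, giving internal stability.

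For condition (\romannumeral2), I substitute $u=u^{*}$ and $x_{0}=0$ into the dissipation identity and let $T\to\infty$ (legitimate by the stability just established and $P^{*}\succeq 0$) to obtain $\|\mathscr{T}(v)\|_{\mathcal{L}_{2}}^{2}=\gamma^{2}\|v\|_{\mathcal{L}_{2}}^{2}-\gamma^{2}\|v-v^{*}\|_{\mathcal{L}_{2}}^{2}\le\gamma^{2}\|v\|_{\mathcal{L}_{2}}^{2}$, which already gives $\|\mathscr{T}\|_{\infty}\le\gamma$. To upgrade to the strict inequality I argue by contradiction: a maximising sequence $v_{k}$ with $\|v_{k}\|_{\mathcal{L}_{2}}=1$ would force $\varepsilon_{k}:=v_{k}-v^{*}(x_{k})\to 0$ in $\mathcal{L}_{2}$; writing $x_{k}$ as the response of the closed-loop It\^o equation $\mathrm{d} x_{k}=[(A-BR^{-1}B^{\top}P^{*}+\gamma^{-2}EE^{\top}P^{*})x_{k}+E\varepsilon_{k}]\mathrm{d} t+A_{1}x_{k}\mathrm{d} W$ with $x_{k}(0)=0$ and invoking the mean-square stability of this closed-loop operator (guaranteed by $P^{*}$ being the stabilizing solution together with the second detectability hypothesis), I obtain $x_{k}\to 0$ and hence $v_{k}\to 0$, contradicting $\|v_{k}\|_{\mathcal{L}_{2}}=1$.

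The main obstacle I foresee is the strict inequality in (\romannumeral2) together with the uniqueness claim for $P^{*}\succeq 0$; both are delicate and lean on the two detectability hypotheses. Uniqueness follows by subtracting the GARE written for two stabilizing candidates $P_{1}^{*},P_{2}^{*}$: their difference satisfies a homogeneous generalized Lyapunov equation whose operator is mean-square stable by the stabilizing property, so Lemma~\ref{lem2} forces $P_{1}^{*}=P_{2}^{*}$. The completion-of-squares algebra and the internal-stability step are then essentially routine once the GARE is in hand.
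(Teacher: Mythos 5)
The paper offers no proof of this lemma: it is quoted verbatim from \cite{zhang2017stochastic}, so there is no internal argument to compare yours against. Judged against the standard proof in that literature (the stochastic bounded real lemma), your completion-of-squares route with the storage function $V(x)=x^{\top}P^{*}x$ is exactly the right one, and your dissipation identity and the derivation of $\|\mathscr{T}\|_{\infty}\le\gamma$ together with the contradiction argument for strictness (rewriting the loop around $\mathscr{A}(P^{*})$ and using its mean-square stability to force a unit-norm maximising sequence to zero) are correct in substance.

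Three points need tightening. First, in the internal-stability step the decrease of $\mathbb{E}V$ plus exact detectability of $(A,A_{1}|C)$ does not by itself invoke Lemma~\ref{lem1}; you need the standard intermediate lemma that exact detectability is inherited by the closed-loop pair $(A-BL^{*},A_{1})$ with the enlarged output weight $Q+P^{*}BR^{-1}B^{\top}P^{*}+\gamma^{-2}P^{*}EE^{\top}P^{*}\succeq Q$, after which a LaSalle/detectability argument yields mean-square stability. Second, your uniqueness step is slightly off: subtracting the GAREs for two stabilizing solutions $P_{1},P_{2}$ gives $\Delta\,\mathscr{A}(P_{2})+\mathscr{A}(P_{1})^{\top}\Delta+A_{1}^{\top}\Delta A_{1}=0$ with $\Delta=P_{1}-P_{2}$, which is a generalized \emph{Sylvester} equation with two different (both stable) drift operators, not a GLE of the form covered by Lemma~\ref{lem2}; the conclusion $\Delta=0$ still holds, but via the spectrum of the Sylvester operator or by differentiating $\mathbb{E}[x_{1}(t)^{\top}\Delta x_{2}(t)]$ along the two stable closed loops, not by citing Lemma~\ref{lem2} directly. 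Third, you never address existence of the stabilizing solution $P^{*}\succeq 0$, which is where the stabilizability and detectability hypotheses do their real work in the cited reference; your proof is conditional on $P^{*}$ being given, which is defensible given how the lemma is phrased but should be stated explicitly.
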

A stabilizing solution in Lemma~\ref{llj} is defined as follows.
\begin{definition}\rm
	The solution $P \succeq 0$ to GARE~(\ref{gare}) is called a stabilizing solution, if the system
	\begin{equation*}		
	\mathrm{d} x\left( t\right) =\mathscr{A}(P) x\left( t\right)\mathrm{d}t +A_{1} x\left( t\right) \mathrm{d} W\left( t\right)			
	\end{equation*}
	is asymptotically mean square stable, where $\mathscr{A}(P):=A-B R^{-1} B^{\top} P+\gamma^{-2} E E^{\top} P$.
\end{definition}
\begin{Remark}\rm
	It is shown in \cite{dragan2006mathematical} that there exists a
	$\gamma^{*}$ such that for $\gamma< \gamma^{*}$, the stochastic $H_{\infty}$ control problem has no solution and  that 	when the conditions of Lemma~\ref{llj} are satisfied, if $\gamma\ge  \gamma^{*}\ge  0$, the GARE~(\ref{gare})  has
	a unique positive semi-definite solution.		
  Furthermore, as stated in Remark~9.2.1 of \cite{dragan2006mathematical}, $\gamma^{*}$ can be obtained by solving a semidefinite programming problem.	
\end{Remark}
It is vital to highlight that comprehensive knowledge of the system model is required to solve GARE~(\ref{gare}).
As a result, presenting algorithms that would converge to the solution of the stochastic $H_{\infty}$ control 
without the requirement of specific models of system dynamics is of special importance from the standpoint
of  control systems.
\section{Model-based SPU algorithm}
The model-based simultaneous policy update (SPU) technique for solving GARE~(\ref{gare})
is provided in this
section. We first provide Algorithm~\ref{alg:1}, and then demonstrate that the series formed by Algorithm~\ref{alg:1} is, in
fact, a Newton sequence, using Kantorovich's Theorem to demonstrate its convergence.
\subsection{Model-based SPU algorithm}
The SPU algorithm is a subtype of the policy iteration (PI) algorithm. PI is divided into two stages:
policy evaluation and policy improvement. The specified control policy and disturbance policy are assessed
using a scalar performance index in the policy evaluation stage. The performance index is then used to
produce new policies. The control policy and the disturbance policy are both improved at the same time
during the policy improvement stage. The procedure of model-based SPU algorithm is given in Algorithm~\ref{alg:1}.
\begin{algorithm}[htb]		
	\begin{algorithmic}[1]		
		\caption{Model-Based SPU Algorithm}
		\label{alg:1}
		\State   Choose an initial matrix $P^{0} \in \mathcal{P}_{0}$ ($\mathcal{P}_{0}$ is determined by Theorem~\ref{th2}) and a large enough number of iterations $N$. Set $L^{i} =R^{-1} B^{\top} P^{i}$,$
		F^{i} =-\gamma^{-2}  E^{\top} P^{i}$. Let the iteration index $i= 0$.
		\State (Policy Evaluation) Evaluate the performance of control policy $u^{i}\left( t\right):=-L^{i}x\left( t\right)$ and disturbance policy $v^{i}\left( t\right):=-F^{i}x\left( t\right)$ by solving the following GLE
		\begin{equation}\label{eq6}
			{P}^{i+1} \overline{A}^{i}+\left( \overline{A}^{i}\right) ^{\top} {P}^{i+1}+A_{1}^{\top} {P}^{i+1} A_{1}+\overline{Q}^{i}=0
		\end{equation}
		for $P^{i+1}$, where
		\begin{equation}\label{ai}
			\begin{split}
				\overline{A}^{i}:=&A-BL^{i}-EF^{i}\\
				=&A-B R^{-1} B^{\top} P^{i}+\gamma^{-2} E E^{\top} P^{i},
			\end{split}			
		\end{equation}
		and
		\begin{equation}\label{qi}
			\begin{split}
				\overline{Q}^{i}:=&Q+\left( L^{i}\right) ^{\top}RL^{i}-\gamma^{2} \left( F^{i}\right) ^{\top} F^{i}\\
				=&Q+P^{i} B R^{-1} B^{\top} P^{i}-\gamma^{-2} P^{i}E E^{\top} P^{i}.
			\end{split} 
			\end{equation}		
		\State (Policy Improvement) Update the control policy  and disturbance policy  simultaneously with the feedback gain matrices as follows:
		\begin{align}
			L^{i+1} &=R^{-1} B^{\top} P^{i+1},\label{lf11} \\
			F^{i+1} &=-\gamma^{-2}  E^{\top} P^{i+1}. \label{lf12}
		\end{align}	
		\State Set $ i \leftarrow i+1 $ and go back to Step 2 until $i=N-1$.
	\end{algorithmic}
\end{algorithm}
\subsection{Convergence analysis}
The convergence of Algorithm~\ref{alg:1} will be shown subsequently. To begin, we demonstrate that
the sequence  $\left\lbrace P^{i}\right\rbrace_{i=0}^{\infty}$  produced by Algorithm~\ref{alg:1} is inherently a Newton sequence. To that goal, consider
a Banach space $\mathcal{P} \subset \mathcal{S}^{n}_{+}$ supplied with Frobenius norm $\left\|\cdot\right\| _{F}$ and the mapping $\mathscr{F} : \mathcal{P}  \longrightarrow  \mathcal{P}$ that is
characterized as follows:
\begin{equation}\label{eq7}
	\begin{split}
		\mathscr{F}(P)=&	P A+A^{\top} P+A_{1}^{\top} P A_{1}-P B R^{-1} B^{\top} P\\
		&+\gamma^{-2}P E E^{\top} P+Q.
	\end{split}	
\end{equation}
The Fr\'{e}chet differential \cite{berger1977nonlinearity} of $\mathscr{F}$ at $P$ can thus be calculated as
\begin{equation}\label{eq8}
	\begin{split}
		\mathrm{d} \mathscr{F}(P;\Delta P )=&\mathscr{F}^{\prime}(P) \Delta P\\
		=&\Delta P \mathscr{A}(P)+\mathscr{A}(P)^{\top} \Delta P+A_{1}^{\top} \Delta PA_{1},
	\end{split}	
\end{equation}
where $\mathscr{F}^{\prime}(P)$ is the Fr\'{e}chet derivative of $\mathscr{F}$ at $P$, $\Delta P \in \mathcal{P}$, $\mathscr{A}(P):=A-B R^{-1} B^{\top} P+\gamma^{-2} E E^{\top} P$. Take into account mapping $\mathscr{N}(P) :=	P -\left( \mathscr{F}^{\prime}(P)\right)^{-1} \mathscr{F}(P)$. Construct a Newton iterative sequence $\left\lbrace P^{i}\right\rbrace_{i=0}^{\infty} $ as
\begin{equation}\label{eq10}
	\mathscr{N} (P^{i})=P^{i+1},i\in \mathcal{Z}_{+}.
\end{equation}
For the sequence generated by Algorithm~\ref{alg:1}, we have the following 
\begin{lemma}\rm \label{th1}
	The sequence $\left\lbrace P^{i}\right\rbrace_{i=0}^{\infty} $ generated by Algorithm~\ref{alg:1} and the Newton sequence
	(\ref{eq10}) are equivalent.
\end{lemma}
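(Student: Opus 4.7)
The plan is to verify the equivalence by a direct algebraic manipulation: rewrite the Newton iteration as a linear equation in $P^{i+1}$, substitute the explicit Fr\'{e}chet-derivative formula (\ref{eq8}), and show that it reduces term-by-term to the GLE (\ref{eq6}).

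First, I would recast (\ref{eq10}) using linearity of the Fr\'{e}chet derivative. Since $\mathscr{N}(P^i)=P^{i+1}$ means $P^{i+1}=P^i-(\mathscr{F}'(P^i))^{-1}\mathscr{F}(P^i)$, applying $\mathscr{F}'(P^i)$ to both sides and using linearity in the increment gives the equivalent linear equation
\begin{equation*}
\mathscr{F}'(P^i)(P^{i+1})=\mathscr{F}'(P^i)(P^i)-\mathscr{F}(P^i).
\end{equation*}
By (\ref{eq8}) with $\Delta P=P^{i+1}$ and noting $\mathscr{A}(P^i)=\overline{A}^i$ (this is immediate from (\ref{ai})), the left-hand side equals
\begin{equation*}
P^{i+1}\overline{A}^i+(\overline{A}^i)^\top P^{i+1}+A_1^\top P^{i+1}A_1.
\end{equation*}
So the equivalence will follow once I identify the right-hand side with $-\overline{Q}^i$.

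Next, I would compute the right-hand side explicitly. Applying (\ref{eq8}) with $\Delta P=P^i$ yields
\begin{equation*}
\mathscr{F}'(P^i)(P^i)=P^i\mathscr{A}(P^i)+\mathscr{A}(P^i)^\top P^i+A_1^\top P^i A_1,
\end{equation*}
while (\ref{eq7}) gives $\mathscr{F}(P^i)=P^iA+A^\top P^i+A_1^\top P^iA_1-P^iBR^{-1}B^\top P^i+\gamma^{-2}P^iEE^\top P^i+Q$. Expanding $P^i\mathscr{A}(P^i)+\mathscr{A}(P^i)^\top P^i$ using the definition of $\mathscr{A}(P^i)$ and subtracting $\mathscr{F}(P^i)$, the linear-in-$A,A_1$ terms cancel, leaving
\begin{equation*}
-\bigl(Q+P^iBR^{-1}B^\top P^i-\gamma^{-2}P^iEE^\top P^i\bigr)=-\overline{Q}^i,
\end{equation*}
which matches (\ref{qi}) exactly. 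Hence the Newton iterate $P^{i+1}$ satisfies precisely the GLE (\ref{eq6}) solved in Algorithm~\ref{alg:1}.

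Finally I would close the argument by invoking uniqueness. As long as $(\overline{A}^i,A_1)$ is mean square stable, Lemma~\ref{lem2} guarantees that (\ref{eq6}) has a unique solution, and by the same token $\mathscr{F}'(P^i)$ is invertible so the Newton step is well defined; the two iterates therefore coincide for every $i$, and by induction starting from the common $P^0$ the full sequences agree. The computation itself is routine; the only delicate point is checking that the nonlinear cross terms $P^iBR^{-1}B^\top P^i$ and $\gamma^{-2}P^iEE^\top P^i$ appear with the right signs after the cancellation, which is what forces the quadratic correction $\overline{Q}^i-Q$ in Algorithm~\ref{alg:1} to match the missing quadratic residual of Newton's method.
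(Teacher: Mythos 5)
Your proposal is correct and follows essentially the same route as the paper's proof: apply $\mathscr{F}^{\prime}(P^{i})$ to both sides of the Newton step, substitute the Fr\'{e}chet-derivative formula (\ref{eq8}), and verify that the right-hand side collapses to $-\overline{Q}^{i}$, so that the Newton iterate satisfies the GLE (\ref{eq6}). Your closing remark on invoking Lemma~\ref{lem2} for uniqueness of the GLE solution is a small but worthwhile addition that the paper leaves implicit.
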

\begin{proof}\rm
	From (\ref{eq10}), it is evident that
	\begin{equation*}
	\mathscr{N} (P^{i})=P^{i+1}=P^{i} -\left( \mathscr{F}^{\prime}(P^{i})\right)^{-1} \mathscr{F}(P^{i}).
	\end{equation*}	
	Pre-multiply $ \mathscr{F}^{\prime}(P^{i})$ yields
	\begin{equation}\label{dairu}
	\mathscr{F}^{\prime}(P^{i}) P^{i+1}=\mathscr{F}^{\prime}(P^{i}) P^{i} - \mathscr{F}(P^{i}).
	\end{equation}	
	From  (\ref{eq8}), we have
	\begin{align}
	&\mathscr{F}^{\prime}(P^{i}) P^{i+1}={P}^{i+1} \overline{{A}}^{i}+\left( \overline{A}^{i}\right) ^{\top} {P}^{i+1}+A_{1}^{\top}{P}^{i+1} A_{1}, \label{11}	\\
	&\mathscr{F}^{\prime}(P^{i}) P^{i}={P}^{i} \overline{A}^{i}+\left( \overline{A}^{i}\right) ^{\top} {P}^{i}+A_{1}^{\top} {P}^{i} A_{1}.\label{12} 
	\end{align}	
	Substituting (\ref{11}) -- (\ref{12}) into (\ref{dairu}) and noting (\ref{eq7}), we can obtain
	\begin{equation*}	
	{P}^{i+1} \overline{A}^{i}+\left( \overline{A}^{i}\right)^{\top} {P}^{i+1}+A_{1}^{\top} {P}^{i+1}A_{1}=-\overline{Q}^{i}.
	\end{equation*}
	This completes the proof.\qed
\end{proof}
 As stated in Lemma~\ref{th1}, the iterative mechanism represented by (\ref{eq6}) is essentially a Newton's iteration. Unfortunately, the Newton's approach cannot guarantee
monotonic convergence on its own.   The following Kantorovich's Theorem ensures  the convergence   of Algorithm~\ref{alg:1} in Theorem~\ref{th2}. 
\begin{lemma}\rm  
	(Kantorovich's Theorem) \label{kth} \cite{tapia1971kantorovich} \par
Suppose that $\left\|\mathcal{F}^{\prime}\left(P^{1}\right)-\mathcal{F}^{\prime}
\left(P^{2}\right)\right\|_{F} \leq c_0 \left\|P^{1}-P^{2}\right\|_{F}$ for all $P^{1}, P^{2} \in \mathcal{P}$. If the following hypotheses are satisfied: \par
\begin{itemize} 
\item[(\romannumeral1)]	 $P^{0} \in \mathcal{P}_{1}:=\left\{P \Big|P \in \mathcal{P}~\text {such that}~   \mathcal{F}^{\prime}\left(P\right)^{-1} ~\text {exists}\right\} $;
\item[(\romannumeral2)] for constants $a,\ b$ such that	$\left\|\left(\mathcal{F}^{\prime}\left(P^{0}\right)\right)^{-1}\right\|_{F} \leq a$, $\left\|\left(\mathcal{F}^{\prime}\left(P^{0}\right)\right)^{-1} \mathcal{F}\left(P^{0}\right)\right\|_{F} \leq  b$, one has $c=c_0 a b \leq \frac{1}{2}$; 
\item[(\romannumeral3)]  $\mathcal{P}_{2}:=\left\{P \Big|\left\|P-P^{0}\right\|_{F} \leq \left(\frac{1-\sqrt{1-2 c}}{c}\right) b\right\} \subset \mathcal{P}_{1}$. 
\end{itemize} 
	Then the Newton iterative sequence $\left\lbrace P^{i}\right\rbrace_{i=0}^{\infty}$ exists and converges to $P^{*} \in \mathcal{P}_{2}$, resulting in $\mathscr{F}(P^{*})=0$. 
\end{lemma}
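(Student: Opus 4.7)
The plan is to invoke the classical \emph{majorant method} of Kantorovich: one constructs a scalar-valued Newton iteration on a quadratic polynomial that dominates the vector-valued Newton iteration in Frobenius norm, then transfers convergence of the scalar sequence to the matrix sequence. Concretely, I would introduce the majorant
\begin{equation*}
    p(t) := \tfrac{c_0}{2} t^2 - \tfrac{t}{a} + \tfrac{b}{a},
\end{equation*}
chosen so that $p(0) = b/a = \|(\mathcal{F}'(P^0))^{-1}\mathcal{F}(P^0)\|_F \cdot (1/a)$ (matching hypothesis (ii) up to scaling), $p'(0) = -1/a$, and $p''(t) \equiv c_0$. Under the hypothesis $c = c_0 a b \le \tfrac12$, the discriminant is nonnegative and $p$ has two nonnegative real roots $0 < t^{*} \le t^{**}$, with $t^{*} = \frac{1-\sqrt{1-2c}}{c}\, b$, which is exactly the radius defining $\mathcal{P}_2$. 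The scalar Newton sequence $t_0 = 0$, $t_{i+1} = t_i - p(t_i)/p'(t_i)$ is then monotone increasing and converges to $t^*$.

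The core of the proof is an induction establishing, for every $i \ge 0$, the three comparison estimates $P^i \in \mathcal{P}_2$, $\|(\mathcal{F}'(P^i))^{-1}\|_F \le -1/p'(t_i)$, and $\|P^{i+1} - P^i\|_F \le t_{i+1} - t_i$. The base case is exactly hypotheses (i)--(ii) together with $t_1 - t_0 = b$. For the induction step, I would use the Banach perturbation lemma applied to $\mathcal{F}'(P^{i+1}) = \mathcal{F}'(P^i)\bigl[I + (\mathcal{F}'(P^i))^{-1}(\mathcal{F}'(P^{i+1}) - \mathcal{F}'(P^i))\bigr]$; the Lipschitz hypothesis combined with the previous-step bound gives $\|(\mathcal{F}'(P^i))^{-1}(\mathcal{F}'(P^{i+1}) - \mathcal{F}'(P^i))\|_F < 1$, yielding invertibility and the required quantitative bound. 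Then, since the Newton step supplies $\mathcal{F}'(P^i)(P^{i+1} - P^i) = -\mathcal{F}(P^i)$, a fundamental-theorem-of-calculus computation gives the quadratic residual estimate
\begin{equation*}
    \|\mathcal{F}(P^{i+1})\|_F \le \tfrac{c_0}{2}\, \|P^{i+1} - P^i\|_F^{2},
\end{equation*}
which, matched against the scalar identity $p(t_{i+1}) = \tfrac{c_0}{2}(t_{i+1} - t_i)^2$, closes the induction on the step size.

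Once the three estimates are in hand, for any $j > i$ one has $\|P^j - P^i\|_F \le \sum_{k=i}^{j-1}(t_{k+1}-t_k) = t_j - t_i$, so $\{P^i\}$ inherits the Cauchy property from the convergent scalar sequence $\{t_i\}$. Completeness of $\mathcal{P}$ with respect to $\|\cdot\|_F$ then produces a limit $P^{*}$ with $\|P^{*} - P^0\|_F \le t^{*}$, so $P^{*} \in \mathcal{P}_2$. Continuity of $\mathcal{F}$ together with $\|\mathcal{F}(P^{i+1})\|_F \le \tfrac{c_0}{2}(t_{i+1}-t_i)^2 \to 0$ forces $\mathcal{F}(P^{*}) = 0$, concluding the proof.

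The main obstacle is the invertibility step in the induction: one needs to be sure that the cumulative drift from $P^0$ is small enough, measured through the initial data $a$ and the Lipschitz constant $c_0$, for the Banach lemma to apply \emph{at every} $P^i$ and to deliver the sharp bound $\|(\mathcal{F}'(P^i))^{-1}\|_F \le -1/p'(t_i)$. This is precisely where the condition $c \le \tfrac12$ (equivalent to the majorant $p$ having real roots and the scalar Newton iterates staying bounded) becomes indispensable; without it the scalar bound $-1/p'(t_i)$ would blow up before the induction could be closed.
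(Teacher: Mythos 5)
The paper does not prove this lemma at all: it is stated as a classical result and attributed to \cite{tapia1971kantorovich}, so there is no in-paper argument to compare against. Your proposal is the standard majorant-function proof of the Kantorovich theorem, and as far as I can check it is correct: the quadratic $p(t)=\tfrac{c_0}{2}t^2-\tfrac{t}{a}+\tfrac{b}{a}$ has discriminant $(1-2c)/a^2\ge 0$ exactly under hypothesis (ii), its smaller root is $t^{*}=\frac{1-\sqrt{1-2c}}{c_0 a}=\frac{1-\sqrt{1-2c}}{c}\,b$, which is precisely the radius of $\mathcal{P}_2$; the three induction invariants ($\|P^{i}-P^{0}\|_F\le t_i$, $\|(\mathcal{F}'(P^{i}))^{-1}\|_F\le -1/p'(t_i)$, $\|P^{i+1}-P^{i}\|_F\le t_{i+1}-t_i$) close correctly via the Banach perturbation lemma, the Taylor-remainder bound $\|\mathcal{F}(P^{i+1})\|_F\le\tfrac{c_0}{2}\|P^{i+1}-P^{i}\|_F^{2}$, and the identity $p(t_{i+1})=\tfrac{c_0}{2}(t_{i+1}-t_i)^{2}$. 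Two small points worth making explicit if you write this out in full: (a) the invertibility of each $\mathcal{F}'(P^{i})$ is delivered by your Neumann-series step, so hypothesis (iii) is really used to guarantee that the iterates and the segments $P^{i}+s(P^{i+1}-P^{i})$ stay inside the region where the Lipschitz bound on $\mathcal{F}'$ is available — you should say where those segments live; and (b) the boundary case $c=\tfrac12$ makes $p'(t^{*})=0$, so the bound $-1/p'(t_i)$ degenerates in the limit and the Cauchy argument, not the derivative bound, must carry the convergence there. Neither point is a gap in substance; your argument is the one the cited reference itself rests on.
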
	
 With the preparation  above, the convergence of Algorithm \ref{alg:1} is stated as follows. 
\begin{theorem}\rm \label{th2}
	  When the conditions of Lemma~\ref{llj} are satisfied,  the sequence generated by Algorithm~\ref{alg:1} $\left\lbrace P^{i}\right\rbrace_{i=0}^{\infty} $ starting from $P^{0} \in \mathcal{P}_{0}$ converges to  $P^{*}$, where $P^{*} \succeq 0$ is the stabilizing solution of GARE~(\ref{gare}).
\end{theorem}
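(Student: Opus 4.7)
The plan is to apply Kantorovich's Theorem (Lemma~\ref{kth}) to the Newton iteration on the map $\mathscr{F}$ defined in (\ref{eq7}), exploiting the equivalence established in Lemma~\ref{th1} between the sequence generated by Algorithm~\ref{alg:1} and Newton's iterates. The set $\mathcal{P}_{0}$ of admissible initial matrices will be constructed as a sufficiently small neighborhood of $P^{*}$ within the set of $P\succeq 0$ for which the closed-loop drift $\mathscr{A}(P)=A-BR^{-1}B^{\top}P+\gamma^{-2}EE^{\top}P$ renders the system asymptotically mean square stable when paired with $A_{1}$.

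My first step is to identify where $\mathscr{F}'(P)$ is invertible. From (\ref{eq8}), $\mathscr{F}'(P)$ is precisely the generalized Lyapunov operator $\mathscr{L}_{\mathscr{A}(P),A_{1}}$, so by Lemma~\ref{lem1} it is invertible if and only if $\sigma(\mathscr{L}_{\mathscr{A}(P),A_{1}})\subset \mathcal{C}_{-}$, which is exactly the mean square stability condition above. I would therefore take
\begin{equation*}
\mathcal{P}_{1}=\{P\succeq 0:\sigma(\mathscr{L}_{\mathscr{A}(P),A_{1}})\subset \mathcal{C}_{-}\},
\end{equation*}
and, by Lemma~\ref{lem2}, represent $(\mathscr{F}'(P))^{-1}$ concretely through the matrix $\mathcal{H}(\mathscr{A}(P),A_{1},H_{n})$. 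Since $\mathscr{A}(P)$ is affine in $P$, the map $P\mapsto \mathscr{F}'(P)$ is affine as well, so I obtain a uniform Lipschitz constant $c_{0}$ expressible in terms of $\|BR^{-1}B^{\top}-\gamma^{-2}EE^{\top}\|_{F}$, handling the global Lipschitz hypothesis of Lemma~\ref{kth}.

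The second step is to quantify Kantorovich's hypotheses near the stabilizing solution $P^{*}$ supplied by Lemma~\ref{llj}. At $P=P^{*}$ we have $\mathscr{F}(P^{*})=0$ and $\mathscr{A}(P^{*})$ is mean square stabilizing, so $\mathscr{F}'(P^{*})$ is invertible. By continuity — made explicit through Proposition~\ref{pro:1} applied to the $\mathscr{H}$-representation of $(\mathscr{F}'(P))^{-1}$ — the quantity $a=\|(\mathscr{F}'(P^{0}))^{-1}\|_{F}$ stays bounded for $P^{0}$ near $P^{*}$, while $b=\|(\mathscr{F}'(P^{0}))^{-1}\mathscr{F}(P^{0})\|_{F}$ vanishes as $P^{0}\to P^{*}$. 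I would then choose $\mathcal{P}_{0}$ as a closed ball around $P^{*}$ inside $\mathcal{P}_{1}$ small enough that both $c=c_{0}ab\leq \tfrac{1}{2}$ and the Kantorovich ball $\mathcal{P}_{2}$ lie within $\mathcal{P}_{1}$.

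The main obstacle, I expect, is precisely the last inclusion $\mathcal{P}_{2}\subset \mathcal{P}_{1}$: one must verify that mean square stability of the closed-loop is preserved throughout the entire Kantorovich neighborhood, not only at the initial point $P^{0}$. This is where the explicit inverse formula of Lemma~\ref{lem2} combined with the perturbation estimate of Proposition~\ref{pro:1} becomes essential, since they allow one to bound $\|(\mathscr{L}_{\mathscr{A}(P),A_{1}})^{-1}\|_{F}$ as a continuous function of $P$ and exclude any crossing of the stability boundary on the ball. Once these three hypotheses of Lemma~\ref{kth} are verified on $\mathcal{P}_{0}$, Kantorovich's Theorem delivers convergence of $\{P^{i}\}_{i=0}^{\infty}$ to some $\bar{P}\in \mathcal{P}_{2}$ with $\mathscr{F}(\bar{P})=0$. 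Since $\bar{P}\in \mathcal{P}_{1}$ is a positive semidefinite solution of the GARE~(\ref{gare}) whose closed-loop is mean square stable, the uniqueness clause of Lemma~\ref{llj} identifies $\bar{P}=P^{*}$, completing the proof.
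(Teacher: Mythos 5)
Your proposal is correct and follows essentially the same route as the paper: the Newton equivalence of Lemma~\ref{th1} followed by Kantorovich's Theorem (Lemma~\ref{kth}) applied on a ball $\mathcal{P}_{0}$ around the stabilizing solution $P^{*}$ guaranteed by Lemma~\ref{llj}. The only difference is that the paper obtains an explicit radius $\frac{2-\sqrt{2}}{2c_{0}a_{0}}$ for $\mathcal{P}_{0}$ by citing Rall's note on Newton's method to certify the Kantorovich hypotheses, whereas you verify those hypotheses (Lipschitz constant from the affinity of $P\mapsto\mathscr{F}^{\prime}(P)$, boundedness of $a$, smallness of $b$, and the inclusion $\mathcal{P}_{2}\subset\mathcal{P}_{1}$) directly by continuity, which yields a qualitative but equally valid neighborhood.
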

\begin{proof}\rm
	Lemma~\ref{th1} demonstrates that the sequence $\left\lbrace P^{i}\right\rbrace_{i=0}^{\infty}$ generated by Algorithm~\ref{alg:1} is equivalent to the Newton sequence obtained
	by (\ref{eq10}).     When the conditions of Lemma~\ref{llj} are satisfied, GARE~(\ref{gare}) has a stabilizing solution  $P^{*} \succeq 0$  \cite{zhang2017stochastic}. Moreover, it is easy to show that the Fr\'{e}chet derivative  $\mathscr{F}^{\prime}(P)$ is Lipschitz continuous with constant $c_0$ in $\mathcal{P}$ and that there exists a constant $a_{0}$ such that $\left\|\left(\mathscr{F}^{\prime}\left(P^{*}\right)\right)^{-1}\right\|_{F} \leq a_{0}$, then  the theorem in \cite{rall1974note}
	implies  that $P^{0} \in \mathcal{P}_{0}:=\left\{P \Big| \|P-P^{*}\|_{F} \leq \frac{2-\sqrt{2}}{2 c_0a_{0} }\right\}$ can  guarantee the boundedness hypotheses of Kantorovich's Theorem. As a result of Kantorovich's Theorem, we can conclude  that the
	sequence $\left\lbrace P^{i}\right\rbrace_{i=0}^{\infty}$ is convergent,  which means that $\textstyle \lim_{i \to \infty}P^{i}=P^{*}$.\qed	 
\end{proof}
In  Algorithm~\ref{alg:1}, an appropriate $P^{0} \in \mathcal{P}_{0}$ can be chosen such that $L^{0} =R^{-1} B^{\top} P^{0}$ is an internally stabilizing gain. The following Theorem~\ref{th3} guarantees that such a $P^{0}$ always exists  and demonstrates that,
under certain conditions, all policies  $\left\lbrace u^{i}\right\rbrace_{i=0}^{\infty} $ with feedback gain matrices updated by (\ref{lf11}) are internally
stabilizing. Then by Lemma~\ref{lem2} and Eq.~(\ref{eq6}), we can conclude that the sequence
$\left\lbrace P^{i}\right\rbrace_{i=0}^{\infty} $ generated by Algorithm~\ref{alg:1} satisfies
\begin{equation}\label{pp1}
\begin{split}
P^{i+1}=&\left( \mathscr{L}_{\overline{A}^{i},A_{1}}\right)^{-1} \left( -\overline{Q}^{i}\right)\\ 
=&\operatorname{vecs}^{-1}\left(\mathcal{H}\left(\overline{A}^{i},A_{1},H_{n}\right)^{-1}\operatorname{vecs}\left( -\overline{Q}^{i}\right)\right).
\end{split}		
\end{equation}
If $P^i$ is viewed as the state and the iteration index $i$ is
viewed as the time, then 
	(\ref{pp1}) is a discrete-time nonlinear dynamical system and $P^*$ is an equilibrium by Theorem~\ref{th2}. The following Theorem~\ref{th3} states that $P^*$ is actually a locally exponentially stable equilibrium.
Furthermore,   Theorem~\ref{th3}  demonstrates that 
 Algorithm~\ref{alg:1} has a local quadratic convergence rate.  Its proof can be found in  Appendix A. 
\begin{theorem}\rm  \label{th3}  
	For any $\varepsilon_0 \in (0, 1)$, there exist $\delta_0(\varepsilon_0) > 0$ and $k_0(\delta_0)> 0$ 
	such that for any $P^{i} \in \mathcal{B}_{\delta_0}\left(P^{*}\right):=\left\lbrace P \in \mathcal{S}^{n}_{+}\Big| \|P-P^{*}\|_{F} \leqslant \delta_0\right\rbrace, i \in \mathcal{Z}_{+}$,
	\begin{itemize} 
		\item[(\romannumeral1)] $\sigma\left( \mathscr{L}_{\overline{A}^{i},A_{1}}\right) \subset \mathcal{C}_{-}$.
		\item[(\romannumeral2)] Algorithm~\ref{alg:1} has a local quadratic convergence rate, i.e.,
		\begin{equation}\label{th22}	
			\|P^{i+1 }-P^{*}\|_{F} \leq k_0\|P^{i}-P^{*}\|_{F} ^{2}.
		\end{equation}
		Especially,
		\begin{equation}\label{th23}
			\|P^{i+1 }-P^{*}\|_{F} \leq \varepsilon_0 \|P^{i}-P^{*}\|_{F}.
		\end{equation}
	\end{itemize} 	
\end{theorem}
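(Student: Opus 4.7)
\textbf{Proof plan for Theorem~\ref{th3}.}

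The plan is to view the iteration as a Newton scheme applied to $\mathscr{F}$ and analyze it near the fixed point $P^{*}$, exploiting the identity $\overline{A}^{i}=\mathscr{A}(P^{i})$ and the $\mathscr{H}$-representation of $\mathscr{L}_{\overline{A}^{i},A_{1}}$. The first preparatory observation is that the Fr\'echet derivative in (\ref{eq8}) satisfies $\mathscr{F}^{\prime}(P)\Delta P=\mathscr{L}_{\mathscr{A}(P),A_{1}}(\Delta P)$; hence $\mathscr{F}^{\prime}(P^{*})=\mathscr{L}_{\mathscr{A}(P^{*}),A_{1}}$. Since $P^{*}$ is the stabilizing solution of GARE~(\ref{gare}), Lemma~\ref{lem1} yields $\sigma(\mathscr{L}_{\mathscr{A}(P^{*}),A_{1}})\subset\mathcal{C}_{-}$, so $0\notin\sigma(\mathscr{F}^{\prime}(P^{*}))$ and, by Lemma~\ref{lem2}, the representation matrix $\mathcal{H}(\mathscr{A}(P^{*}),A_{1},H_{n})$ is invertible with some bound $\|\mathcal{H}(\mathscr{A}(P^{*}),A_{1},H_{n})^{-1}\|_{F}\le a_{0}$.

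For part (i), I would exploit the continuous dependence of $\mathcal{H}(\overline{A}^{i},A_{1},H_{n})$ on $P^{i}$. Note that $\overline{A}^{i}-\mathscr{A}(P^{*})=(\gamma^{-2}EE^{\top}-BR^{-1}B^{\top})(P^{i}-P^{*})$, so $\overline{A}^{i}$ depends linearly (and hence Lipschitz continuously) on $P^{i}$. Since $\mathcal{H}(\cdot,A_{1},H_{n})$ is an affine function of its first argument, the entries of $\mathcal{H}(\overline{A}^{i},A_{1},H_{n})$ are continuous in $P^{i}$; by continuity of the spectrum under matrix perturbations together with the open condition $\sigma\subset\mathcal{C}_{-}$, there exists $\delta_{1}>0$ so that $\|P^{i}-P^{*}\|_{F}\le\delta_{1}$ implies $\sigma(\mathscr{L}_{\overline{A}^{i},A_{1}})\subset\mathcal{C}_{-}$. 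In particular, the operator $\mathscr{F}^{\prime}(P^{i})$ is invertible on this neighborhood and, using Proposition~\ref{pro:1} applied to $\mathcal{H}(\overline{A}^{i},A_{1},H_{n})$ versus $\mathcal{H}(\mathscr{A}(P^{*}),A_{1},H_{n})$, one obtains a uniform bound $\|(\mathscr{F}^{\prime}(P^{i}))^{-1}\|_{F}\le a<\infty$ by possibly shrinking $\delta_{1}$.

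For part (ii), I would run the classical Newton quadratic-convergence argument using the Lipschitz constant $c_{0}$ of $\mathscr{F}^{\prime}$ (which is straightforward since $\mathscr{F}^{\prime}$ is affine in $P$). By Lemma~\ref{th1} and (\ref{eq10}),
\begin{equation*}
P^{i+1}-P^{*}=-\left(\mathscr{F}^{\prime}(P^{i})\right)^{-1}\left[\mathscr{F}(P^{i})-\mathscr{F}(P^{*})-\mathscr{F}^{\prime}(P^{i})(P^{i}-P^{*})\right],
\end{equation*}
and the standard integral remainder $\mathscr{F}(P^{i})-\mathscr{F}(P^{*})=\int_{0}^{1}\mathscr{F}^{\prime}(P^{*}+t(P^{i}-P^{*}))(P^{i}-P^{*})\,\mathrm{d}t$ together with the $c_{0}$-Lipschitz estimate gives the bracket bounded by $\tfrac{c_{0}}{2}\|P^{i}-P^{*}\|_{F}^{2}$. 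Combining with the uniform bound $a$ on the inverse from the previous step yields (\ref{th22}) with $k_{0}=ac_{0}/2$. Finally, given $\varepsilon_{0}\in(0,1)$, choose $\delta_{0}=\min\{\delta_{1},\varepsilon_{0}/k_{0}\}$; then $\|P^{i}-P^{*}\|_{F}\le\delta_{0}$ forces $\|P^{i+1}-P^{*}\|_{F}\le k_{0}\delta_{0}\|P^{i}-P^{*}\|_{F}\le\varepsilon_{0}\|P^{i}-P^{*}\|_{F}<\|P^{i}-P^{*}\|_{F}$, so the ball $\mathcal{B}_{\delta_{0}}(P^{*})$ is forward-invariant and (\ref{th23}) follows by induction.

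The main obstacle is step (i): the stepwise stabilizability claim cannot be read off from the GARE itself and requires translating the operator-spectrum statement into a matrix-spectrum statement via the $\mathscr{H}$-representation, so that continuity of eigenvalues can be applied. Once that is done and Proposition~\ref{pro:1} is used to propagate the perturbation to the inverse $(\mathscr{F}^{\prime}(P^{i}))^{-1}$, the Newton quadratic-rate argument in part (ii) is essentially routine.
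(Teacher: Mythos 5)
Your proposal is correct and follows essentially the same route as the paper: part (i) is the same continuity-of-eigenvalues argument for $\mathscr{L}_{\overline{A}^i,A_1}$ near $\mathscr{A}(P^*)$, and part (ii) exploits the same quadratic remainder of $\mathscr{F}$ together with a uniform bound on the inverse of the linearization near $P^*$. The only cosmetic difference is that the paper obtains the quadratic estimate by explicitly subtracting the two generalized Lyapunov equations satisfied by $P^{i+1}$ and $P^*$ and bounding $\mathcal{H}(\overline{A}^i,A_1,H_n)^{-1}$ in $\operatorname{vecs}$-coordinates, whereas you invoke the abstract Newton error formula with the integral Taylor remainder — since $\mathscr{F}$ is exactly quadratic these are the same computation.
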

\begin{Remark}\rm
	It is worth noting that the presented Algorithm~\ref{alg:1} is inherently Newton's method, which is not a
	global method. This indicates that Algorithm~\ref{alg:1} could not work when the initial matrix selected is distant
	from the solution of GARE~(\ref{gare}). Selecting appropriate initializations or establishing global methods is still
	a challenging problem up to now.
\end{Remark}
\begin{Remark}\rm
	To derive the stochastic $H_{\infty}$ control, Algorithm~\ref{alg:1} requires the complete information of the system dynamics.
	All input data for Problem 1 must be known at the start of the algorithm, and the results must  output immediately after solving  Problem
	1. As a result, Algorithm~\ref{alg:1} is a model-based off-line method.
\end{Remark}
\begin{Remark}\rm
	In Algorithm~\ref{alg:1}, the disturbance input must be adjusted in the prescribed manner (\ref{lf12}). In practice, however,  the disturbance
	and the state are independent, moreover, the disturbance cannot be specified. This issue will be addressed in
	the next section.
\end{Remark}
\section{Model-free off-policy RL algorithm}
In this section, we utilize the idea of RL approaches to present an SPU-based off-policy RL algorithm to
learn the solution of GARE~(\ref{gare}) without the need for system dynamics information. 

Assume that $u\left( t\right) $ and $v\left( t\right)$ are  the behavior policies that are implemented in system~(\ref{e1}) to generate data. On the contrary, $u^{i}\left( t\right)=-L^{i}x\left( t\right)$, $v^{i}\left( t\right)=-F^{i}x\left( t\right)$, $i \ge 1$ are the target policies that are being trained and improved in the $i$th iteration.
 
Rewrite the original system~(\ref{e1}) in the following form:
\begin{equation*}
	\begin{split}
		\mathrm{d} x\left( t\right) =&\left\lbrace \left( A-BL^{i}-EF^{i}\right)  x\left( t\right)+B\left[ L^{i} x\left( t\right)+u\left( t\right)\right] \right.\\
		&\left.+E\left[ F^{i} x\left( t\right)+v\left( t\right)\right]\right\rbrace  \mathrm{d} t+A_{1} x\left( t\right) \mathrm{d} W\left( t\right).
	\end{split} 
\end{equation*}
Let $P^{i+1}$ be the solution of the GLE (\ref{eq6}), and then by application of  It\^o's  formula to $x\left( \tau\right)^{\top} P^{i+1} x\left( \tau\right)$,  it yields that
\begin{equation}\label{eq23}	
\begin{array}{l} 
\mathrm{d}\left[x\left( \tau\right)^{\top} P^{i+1} x\left( \tau\right)\right] \\
=\left\lbrace  x^{\top}\left( \tau\right) \left[ P^{i+1}\overline{A}^{i}+\left(\overline{A}^{i}\right)^{\top} P^{i+1} + A_{1} ^{\top} P^{i+1}  A_{1}\right] x\left( \tau\right) \right. \\
\left.	+2\left(L^{i} x\left( \tau\right)+u\left( \tau\right)\right)^{\top} B^{\top} P^{i+1} x\left( \tau\right) \right.\\
\left.+2\left(F^{i} x\left( \tau\right)+v\left( \tau\right)\right)^{\top} E^{\top} P^{i+1}x\left( \tau\right)
\right\rbrace  \mathrm{d} \tau \\
+\left[  x\left( \tau\right)^{\top} \left(  P^{i+1}A_{1}+A_{1}^ {T} P^{i+1}\right) x\left( \tau\right)\right]  \mathrm{d}W\left( \tau\right) .
\end{array}	
\end{equation}
Integrating both sides of the equation (\ref{eq23}) throughout the trajectory
of (\ref{e1}) and noting (\ref{eq6}) and (\ref{qi}), we can obtain
\begin{equation}\label{eqin}
\begin{aligned}
&\left.\left(x\left( \tau\right)^{\top} P^{i+1} x\left( \tau\right)\right)\right|_{t} ^{t+\Delta t}\\ 
=& \int_{t}^{t+\Delta t} \left[  x\left( \tau\right)^{\top} \left(   \gamma^{2} \left( F^{i}\right) ^{\top} F^{i}-Q-\left( L^{i}\right) ^{\top}RL^{i} \right)  x\left( \tau\right)\right. \\
&+2\left(L^{i} x\left( \tau\right)+u\left( \tau\right)\right)^{\top}  B^{\top} P^{i+1} x\left( \tau\right)\\
& +\left.2 \left(F^{i} x\left( \tau\right)+v\left( \tau\right)\right)^{\top}  E^{\top} P^{i+1}x\left( \tau\right)\right]   \mathrm{d} \tau \\
&+\int_{t}^{t+\Delta t} \left[  x\left( \tau\right)^{\top} \left(  P^{i+1}A_{1}+A_{1}^ {T} P^{i+1}\right) x\left( \tau\right)\right]  \mathrm{d}W\left( \tau\right). 
\end{aligned}	
\end{equation}
Taking the conditional expectation on both sides of the equation (\ref{eqin}), one gets
\begin{equation}\label{a2}
\begin{array}{l} 
\mathbb{E}\left[x(t+\Delta t)^{\top} P^{i+1} x(t+\Delta t)\big| \mathcal{F}_{t}\right]-x(t)^{\top} P^{i+1} x(t)	\\
=\mathbb{E}\left\lbrace \int_{t}^{t+\Delta t}  \left[x\left( \tau\right)^{\top}   \left( \gamma^{2} \left( F^{i}\right) ^{\top} F^{i}-Q-\left( L^{i}\right) ^{\top}RL^{i}\right)    x\left( \tau\right)  \right. \right.\\
+\left.\left. 2\left(L^{i} x\left( \tau\right)+u\left( \tau\right)\right)^{\top}  B^{\top} P^{i+1} x\left( \tau\right)\right.\right.\\ 
+\left.\left.2  \left(F^{i} x\left( \tau\right)+v\left( \tau\right)\right)^{\top}  E^{\top} P^{i+1}x\left( \tau\right)\right]  \mathrm{d} \tau \Big| \mathcal{F}_{t}\right\rbrace.
\end{array}			
\end{equation}
 We now employ the crucial Eq.~(\ref{a2})
to solve the unknown vector $P^{i+1}$ in the least-squares sense. Construct the regression vector as 
\begin{equation*}
	\Xi^{i+1}:=\left[\operatorname{vecs}\left(\Theta^{i+1}_{1}\right)^{\top},\operatorname{vec}\left(\Theta^{i+1}_{2}\right)^{\top}, \operatorname{vec}\left(\Theta^{i+1}_{3}\right)^{\top}	\right]^{\top},
\end{equation*}
where
\begin{equation} \label{canshu}
	\Theta^{i+1}_{1}=P^{i+1}, \Theta^{i+1}_{2}=B^{\top} P^{i+1}, \Theta^{i+1}_{3}=E^{\top} P^{i+1}.
\end{equation}
Since there are  $\frac{n × (n + 1)}{2} +nm +np$ 
unknown components in the regression vector, we need to record the state along
trajectories at   $s$ intervals $\left( s\ge \frac{n × (n + 1)}{2} +nm +np\right) $: $\left[ t_{j}, t_{j} + \Delta t\right]$, $j=1,2,\ldots,s$,  where $0\leq t_{1}<t_{1} + \Delta t\leq t_{2}<\cdots <  t_{s}+\Delta t<\infty$. In other words, for initial
state $x\left( t_{j}\right) $ with $j=1,2,\ldots,s$ at each iteration, one needs to solve a set of equations
as
\begin{equation}\label{eq41}
\begin{array}{l} 
\mathbb{E}\left[x(t_{j}+\Delta t)^{\top} P^{i+1} x(t_{j}+\Delta t)\big| \mathcal{F}_{t_{j}}\right]-x(t_{j})^{\top} P^{i+1} x(t_{j})	\\
=\mathbb{E}\left\lbrace \int_{t_{j}}^{t_{j}+\Delta t}  \left[x\left( \tau\right)^{\top}   \left( \gamma^{2} \left( F^{i}\right) ^{\top} F^{i}-Q-\left( L^{i}\right) ^{\top}RL^{i}\right)    x\left( \tau\right)  \right. \right.\\
+\left.\left. 2\left(L^{i} x\left( \tau\right)+u\left( \tau\right)\right)^{\top}  B^{\top} P^{i+1} x\left( \tau\right)\right.\right.\\ 
+\left.\left.2  \left(F^{i} x\left( \tau\right)+v\left( \tau\right)\right)^{\top}  E^{\top} P^{i+1}x\left( \tau\right)\right]  \mathrm{d} \tau \Big| \mathcal{F}_{t_{j}}\right\rbrace.
\end{array}			
\end{equation}
Utilizing the collected data to define the data matrices $\Phi^{i}$ and $\Upsilon^{i}$ as
\begin{equation}\label{eq66}
\begin{array}{r} 
\Phi^{i}=\left[\Delta _{\tilde{x}},-2 \mathcal{I}_{x x}\left( I_{n}\otimes \left( L^{i}\right) ^{\top} \right)-2\mathcal{I}_{x u},\right. \\ 
\left.  
-2 \mathcal{I}_{x x}\left( I_{n}\otimes \left( F^{i}\right) ^{\top} \right)-2\mathcal{I}_{x v} \right],
\end{array}	
\end{equation}
and
\begin{equation*}
\Upsilon^{i}=\mathcal{I}_{\tilde{x}}\operatorname{vecs}\left(\gamma^{2} \left( F^{i}\right) ^{\top} F^{i}-Q-\left( L^{i}\right) ^{\top}RL^{i}\right),
\end{equation*}
where
\begin{equation*}
\begin{aligned}		
\Delta _{\tilde{x}} =&\left[ \delta_{1},\delta_{2},\ldots,\delta_{s}\right]^{\top},\\
\mathcal{I}_{\tilde{x}}=&\left[\tilde{\mathcal{X}}_{1},\tilde{\mathcal{X}}_{2}, \ldots, \tilde{\mathcal{X}}_{s}\right]^{\top}, \\
\mathcal{I}_{x x}=&\left[\mathcal{X}_{1}, \mathcal{X}_{2}, \ldots, \mathcal{X}_{s}\right]^{\top},  \\
\mathcal{I}_{x u}=&\left[\mathcal{U}_{1}, \mathcal{U}_{2}, \ldots, \mathcal{U}_{s}\right]^{\top}, \\
\mathcal{I}_{x v}=&\left[\mathcal{V}_{1}, \mathcal{V}_{2}, \ldots, \mathcal{V}_{s}\right]^{\top} \\
\end{aligned}
\end{equation*}
with
\begin{equation*}
\begin{aligned}		
\delta_{j}&:=\mathbb{E}\left[\tilde{x}\left( t_{j}+\Delta t\right)   \Big|\mathcal{F}_{t_{j}}\right] -\tilde{x}\left( t_{j}\right),\\
\tilde{\mathcal{X}}_{j}&:=\mathbb{E} \left[\int_{t_{j}}^{t_{j}                                  +\Delta t}  \tilde{x}\left( \tau\right)    \mathrm{d} \tau\Big|\mathcal{F}_{t_{j}}\right], \\
\mathcal{X}_{j}&:=\mathbb{E} \left[\int_{t_{j}}^{t_{j}                                  +\Delta t}  x\left( \tau\right) \otimes  x\left( \tau\right)    \mathrm{d} \tau\Big|\mathcal{F}_{t_{j}}\right], \\
\mathcal{U}_{j}&:=\mathbb{E} \left[\int_{t_{j}}^{t_{j}                                  +\Delta t}  x\left( \tau\right) \otimes  u\left( \tau\right)    \mathrm{d} \tau\Big|\mathcal{F}_{t_{j}}\right], \\
\mathcal{V}_{j}&:=\mathbb{E} \left[\int_{t_{j}}^{t_{j}                                  +\Delta t}  x\left( \tau\right) \otimes  v\left( \tau\right)    \mathrm{d} \tau\Big|\mathcal{F}_{t_{j}}\right].\\
\end{aligned}
\end{equation*}
According to Kronecker product representation and based on Kronecker product property, the set of equations (\ref{eq41}) can be rewritten as
\begin{equation}\label{lseq}
\Phi^{i} \Xi^{i+1}=\Upsilon^{i}.
\end{equation}
Then, under the assumption that $\Phi^{i}$  has full column rank, which may be assured by some rank condition in the following Lemma~\ref{lem6},
the least-square solution of equation (\ref{lseq}) is given by
\begin{equation}\label{lss}
\Xi^{i+1}=\left(\left(\Phi^{i}\right)^{\top} \Phi^{i}\right)^{-1}\left(\Phi^{i}\right)^{\top} \Upsilon^{i}.
\end{equation} 
\begin{lemma}\rm \label{lem6} 
	Assume that there exists a positive integer $S$, such that for all $s\geqslant S$,
	\begin{equation}\label{rk}
		\operatorname{rank}\left(\left[\mathcal{I}_{\tilde{x}},\mathcal{I}_{x u},\mathcal{I}_{x v}\right]\right)=\frac{n(n+1)}{2}+ nm+np,
	\end{equation}
	then $\Phi^{i}$ has full column rank for all $i \in \mathcal{Z}_{+}$.
\end{lemma}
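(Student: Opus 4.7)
The plan is to factor $\Phi^{i}$ as $\Phi^{i} = [\mathcal{I}_{\tilde{x}},\mathcal{I}_{xu},\mathcal{I}_{xv}]\,T^{i}$ for some square matrix $T^{i}$ determined by the system coefficients and the current gains $L^{i},F^{i}$, and then to show that $T^{i}$ is invertible. Once both pieces are in hand the conclusion is immediate: if $\Phi^{i}v=0$ then $[\mathcal{I}_{\tilde{x}},\mathcal{I}_{xu},\mathcal{I}_{xv}](T^{i}v)=0$, the rank hypothesis (\ref{rk}) forces $T^{i}v=0$, and invertibility of $T^{i}$ yields $v=0$.

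To produce the factorization I would use two ingredients. First, the $\mathscr{H}$-representation machinery from Section 2 yields a matrix $H_{\mathrm{sym}}$ with $\operatorname{vec}(xx^{\top}) = H_{\mathrm{sym}}\tilde{x}$, so that $\mathcal{I}_{xx} = \mathcal{I}_{\tilde{x}} H_{\mathrm{sym}}^{\top}$; this absorbs all of the $\mathcal{I}_{xx}$ factors sitting in the 2nd and 3rd block columns of $\Phi^{i}$ into contributions of $\mathcal{I}_{\tilde{x}}$. Second, applying It\^o's formula to $x(\tau)^{\top}Px(\tau)$ along (\ref{e1}) for an arbitrary symmetric $P$ and taking conditional expectation on $\mathcal{F}_{t_{j}}$ yields
\begin{equation*}
\delta_{j}^{\top}\operatorname{vecs}(P) = \tilde{\mathcal{X}}_{j}^{\top}\operatorname{vecs}(\mathscr{L}_{A,A_{1}}(P)) + 2\mathcal{U}_{j}^{\top}\operatorname{vec}(B^{\top}P) + 2\mathcal{V}_{j}^{\top}\operatorname{vec}(E^{\top}P).
\end{equation*}
Varying $P$ over $\mathcal{S}^{n}$ and stacking across $j=1,\ldots,s$ writes $\Delta_{\tilde{x}}$ as a specific linear combination of columns of $\mathcal{I}_{\tilde{x}}$, $\mathcal{I}_{xu}$, $\mathcal{I}_{xv}$. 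Together these identifications present $T^{i}$ as a $3\times 3$ block matrix whose $(1,1)$ block is the matrix representation of $\mathscr{L}_{A,A_{1}}$ in the vecs basis, whose $(2,2)$ and $(3,3)$ blocks are $-2I_{nm}$ and $-2I_{np}$, and whose off-diagonal blocks involve $H_{\mathrm{sym}}$ together with $I_{n}\otimes B^{\top}$, $I_{n}\otimes E^{\top}$, $I_{n}\otimes(L^{i})^{\top}$, $I_{n}\otimes(F^{i})^{\top}$.

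Invertibility of $T^{i}$ then follows from a Schur-complement reduction with respect to the manifestly invertible bottom-right block $\operatorname{diag}(-2I_{nm},-2I_{np})$. Using the identity $2H_{\mathrm{sym}}^{\top}\operatorname{vec}(M)=\operatorname{vecs}(M+M^{\top})$ to collapse the Kronecker products, the Schur complement simplifies to exactly the matrix representation of $\mathscr{L}_{\overline{A}^{i},A_{1}}$, with $\overline{A}^{i}$ given in (\ref{ai}). By Lemma~\ref{lem1}, this operator is invertible whenever $(\overline{A}^{i},A_{1})$ is asymptotically mean square stable — the very condition needed to make the GLE (\ref{eq6}) uniquely solvable at each iteration, which is supplied by Theorem~\ref{th3}(\romannumeral1) once $P^{i}$ lies in a neighborhood of $P^{*}$.

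The main obstacle is the algebraic bookkeeping in the Schur step: one must carefully track the different normalizations between $\operatorname{vec}$, $\operatorname{vecs}$ and $H_{\mathrm{sym}}$, and verify that the off-diagonal Kronecker blocks combine with the $(1,1)$ block in exactly the right way to reproduce $\mathscr{L}_{\overline{A}^{i},A_{1}}$ rather than a spurious related operator. The It\^o-derived identity for $\delta_{j}^{\top}\operatorname{vecs}(P)$ is the pivotal step that ties the boundary data $\Delta_{\tilde{x}}$ back to the integral data $\mathcal{I}_{\tilde{x}},\mathcal{I}_{xu},\mathcal{I}_{xv}$ and makes the factorization possible.
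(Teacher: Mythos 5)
Your proposal is correct and is essentially the paper's own argument in matrix form: the paper takes an arbitrary null vector $\Lambda$ of $\Phi^{i}$, uses the It\^o identity (\ref{eq23}) to rewrite $\Phi^{i}\Lambda$ as $\left[\mathcal{I}_{\tilde{x}},2\mathcal{I}_{xu},2\mathcal{I}_{xv}\right]$ applied to a transformed vector $(\operatorname{vecs}(\Lambda_{1}),\operatorname{vec}(\Lambda_{2}),\operatorname{vec}(\Lambda_{3}))$, invokes the rank condition to force $\Lambda_{1}=\Lambda_{2}=\Lambda_{3}=0$, and then back-substitutes $Y=B^{\top}X$, $Z=E^{\top}X$ to reduce to the homogeneous GLE, which has only the trivial solution because $\sigma\bigl(\mathscr{L}_{\overline{A}^{i},A_{1}}\bigr)$ contains no zero eigenvalue by Theorem~\ref{th3}(\romannumeral1). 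Your factorization $\Phi^{i}=[\mathcal{I}_{\tilde{x}},\mathcal{I}_{xu},\mathcal{I}_{xv}]T^{i}$ with the Schur-complement reduction of $T^{i}$ onto $\mathscr{L}_{\overline{A}^{i},A_{1}}$ is precisely that back-substitution, so the two proofs coincide step for step.
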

\begin{proof}\rm
	To prove this lemma, we  only need to show that for each given  $i \in \mathcal{Z}_{+}$, $\Phi^{i}\Lambda=0$ 	
	has unique solution $\Lambda=0$.
	Define $\Lambda	=\left[\left(\operatorname{vecs}\left( X\right) \right) ^{\top},\left(\operatorname{vec}\left(Y\right) \right) ^{\top},\left( \operatorname{vec}\left(Z\right)\right) ^{\top}	\right]^{\top}$,  where  $X\in \mathcal{S}^{n},Y \in \mathcal{R}^{m \times n} $ and  $Z\in \mathcal{R}^{p \times n}$.
	
	By  equation (\ref{eq23}) and the definition of $\Phi^{i}$ in (\ref{eq66}), we have
	\begin{equation}\label{eq28}
	\Phi^{i} \Lambda=\left[\mathcal{I}_{\tilde{x}},2\mathcal{I}_{x u},2\mathcal{I}_{x v}\right] \left[\begin{array}{c}
	\operatorname{vecs}\left( \Lambda_{1}\right)\\
	\operatorname{vec}\left(\Lambda_{2}\right)\\
	\operatorname{vec}\left(\Lambda_{3}\right)
	\end{array}\right]=0,
	\end{equation}	
	where	
	\begin{align}
	\Lambda_{1}=&X \overline{A}^{i}+\left( \overline{A}^{i}\right) ^{\top} X+A_{1}^{\top} X A_{1}\notag\\
	&+\left( L^{i}\right) ^{\top}\left( B^{\top}X-Y\right) +\left( B^{\top}X-Y\right)^{\top}L^{i}\notag \\
	&+\left( F^{i}\right) ^{\top}\left( E^{\top}X-Z\right) +\left( E^{\top}X-Z\right)^{\top}F^{i},\label{eq291}\\
	\Lambda_{2}=&B^{\top}X-Y,\label{eq292}\\ 
	\Lambda_{3}=&E^{\top}X-Z.\label{eq293}
	\end{align}	
	Under the rank condition in (\ref{rk}), we get that  the only solution to (\ref{eq28}) is  $\Lambda_{1} = 0$, $\Lambda_{2} = 0$ and
	$\Lambda_{3} = 0$.
	
	In accordance with (\ref{eq292}) and (\ref{eq293}), we have that $Y= B^{\top}X$, $Z =E^{\top}X$ and that (\ref{eq291}) is reduced to the following GLE:
	\begin{equation}\label{eq30}
	X \overline{A}^{i}+\left( \overline{A}^{i}\right) ^{\top} X+A_{1}^{\top} X A_{1}=0.
	\end{equation}
	Because $\sigma\left( \mathscr{L}_{\overline{A}^{i},A_{1}}\right)$ does not contain zero eigenvalues, as demonstrated in the proof of Theorem~\ref{th3}, the only
	solution to (\ref{eq30}) is $X = 0$. At last,  (\ref{eq292}) and (\ref{eq293}) provide  $Y = 0$ and $Z = 0$. To summarize, we have $\Lambda = 0$.
	As a result, $\Phi^{i}$ must have full column rank for all $i \in \mathcal{Z}_{+}$. The proof is complete.	
\end{proof}
\begin{Remark}\rm
	The rank condition in Lemma~\ref{lem6} is analogous to the persistent excitation (PE) requirement \cite{willems2005note}
	in some sense. In other words, both the rank condition in Lemma~\ref{lem6} and the PE condition are intended to
	have a unique solution to (\ref{lseq}). In practice, we frequently inject the exploration noise into the input to do
	this.
\end{Remark}
We can now present the  model-free off-policy RL algorithm.
\begin{algorithm}[htb]
	\caption{Model-Free Off-Policy RL Algorithm}
	\label{alg:2}
	\begin{algorithmic}[1]
		\State  Choose an initial matrix $P^{0} \in \mathcal{P}_{0}$. Set $L^{0} =R^{-1} B^{\top} P^{0}$,$
		F^{0} =-\gamma^{-2}  E^{\top} P^{0}$. Apply admissible control policies $u\left( t\right) =-Lx\left( t\right)$, $v\left( t\right)=-Fx\left( t\right)$ with exploration noises to system~(\ref{e1}) and collect the input and state data. 	
		\State  Calculate $\mathcal{I}_{\tilde{x}},\mathcal{I}_{x u},\mathcal{I}_{x v}$ until the rank condition in (\ref{rk}) is satisfied.
		\State  Select a large enough number of iterations $N$. Set $u^{i}\left( t\right)=u\left( t\right)$, $v^{i}\left( t\right)=v\left( t\right)$. Let the iteration index $i= 0$. 
		\For {$i=0,1,\cdots, N-1$}
		\State Construct the data matrices $\Phi^{i}$ and $\Upsilon^{i}$. Solve  equation (\ref{lseq}) for $\Xi^{i+1}$.	
		\State  Update policies $u^{i}\left( t\right)=-L^{i}\left( t\right)$ and $v^{i}\left( t\right)=-F^{i}\left( t\right)$ simultaneously with the iterative feedback gain matrices as	
		\begin{align}
		L^{i+1} &=R^{-1} \Theta^{i+1}_{2},\label{lf31}\\
		F^{i+1} &=-\gamma^{-2}  \Theta^{i+1}_{3}.\label{lf32}
		\end{align}	
		\EndFor \\
		\Return  $P^{N}$, $L^{N}$ and $F^{N}$.
	\end{algorithmic}
\end{algorithm}	
\begin{Remark}\rm
	Algorithm~\ref{alg:2} may be separated into two phases. Lines 1--2 of Algorithm~\ref{alg:2} constitute the data
	collecting phase, Lines 3--7 constitute the learning phase. In the process of learning, Algorithm~\ref{alg:2} requires no prior knowledge of system dynamics. In addition, the target policies are unrelated to the behavior policies in Algorithm ~\ref{alg:2}, hence Algorithm~\ref{alg:2} is a model-free off-policy algorithm. Furthermore,  the disturbance policy which is specified and
	updated in (\ref{lf32}) does not need to be applied to the system.
\end{Remark}

In Theorem~\ref{th4}, we then show how the suggested Algorithm~\ref{alg:2} converges.
\begin{theorem}\rm \label{th4} 
	 If  the conditions of Lemma~\ref{llj} and the rank condition in Lemma~\ref{lem6} are satisfied, then the sequence generated by Algorithm~\ref{alg:2} $\left\lbrace P^{i}\right\rbrace_{i=0}^{\infty} $ starting from $P^{0} \in \mathcal{P}_{0}$ converges to  $P^{*}$, where $P^{*} \succeq 0$ is the stabilizing solution of GARE~(\ref{gare}).	
\end{theorem}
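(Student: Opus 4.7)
The plan is to reduce Theorem~\ref{th4} to Theorem~\ref{th2} by showing that, under the stated hypotheses, the sequence $\{P^{i}\}_{i=0}^\infty$ produced by the model-free Algorithm~\ref{alg:2} coincides with the sequence produced by the model-based Algorithm~\ref{alg:1} launched from the same $P^{0}\in\mathcal{P}_0$. Once this equivalence is established, the convergence $P^{i}\to P^{*}$ follows immediately from Theorem~\ref{th2}, since Algorithm~\ref{alg:1} is already shown to deliver the stabilizing solution of the GARE~(\ref{gare}).

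The key step is an induction on $i$. Assume that after $i$ iterations both algorithms have produced the same $P^{i}$, and therefore the same gains $L^{i}=R^{-1}B^{\top}P^{i}$ and $F^{i}=-\gamma^{-2}E^{\top}P^{i}$. Let $\bar P^{\,i+1}$ denote the solution of the GLE~(\ref{eq6}) that Algorithm~\ref{alg:1} would produce at this step. The derivation from It\^o's formula that led to (\ref{a2}) and (\ref{eq41}) shows that the triple $(\bar P^{\,i+1},\,B^{\top}\bar P^{\,i+1},\,E^{\top}\bar P^{\,i+1})$ -- assembled via $\operatorname{vecs}$/$\operatorname{vec}$ into a vector $\bar \Xi^{\,i+1}$ of the form~(\ref{canshu}) -- satisfies $\Phi^{i}\bar\Xi^{\,i+1}=\Upsilon^{i}$. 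Thus $\bar\Xi^{\,i+1}$ is \emph{one} solution of the linear system~(\ref{lseq}). By Lemma~\ref{lem6} together with the rank condition, $\Phi^{i}$ has full column rank, so~(\ref{lseq}) has at most one solution and therefore $\Xi^{i+1}=\bar\Xi^{\,i+1}$, giving $\Theta^{i+1}_{1}=\bar P^{\,i+1}$, $\Theta^{i+1}_{2}=B^{\top}\bar P^{\,i+1}$, $\Theta^{i+1}_{3}=E^{\top}\bar P^{\,i+1}$. The update rules~(\ref{lf31})--(\ref{lf32}) then reproduce exactly (\ref{lf11})--(\ref{lf12}), closing the induction step.

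To invoke Lemma~\ref{lem6} I must check that the rank hypothesis (\ref{rk}) actually implies $\Phi^{i}$ has full column rank \emph{at every iteration}, i.e.\ the spectral condition $\sigma(\mathscr{L}_{\overline{A}^{i},A_{1}})\subset \mathcal{C}_-$ used in the proof of Lemma~\ref{lem6} holds for each $i$. This is where the hypothesis $P^{0}\in\mathcal{P}_{0}$ matters: Theorem~\ref{th3}(i) guarantees exactly this spectral property along the model-based trajectory, and since the two trajectories coincide by the induction above, it holds along the model-free trajectory as well. So the induction is self-consistent.

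The main technical obstacle is precisely this intertwining between the inductive equivalence of the two algorithms and the inductive verification of the full-rank property of $\Phi^{i}$. I would handle it by stating a joint induction: at step $i$ simultaneously assert (a) $P^{i}$ computed by Algorithm~\ref{alg:2} equals that of Algorithm~\ref{alg:1}, and (b) $P^{i}\in\mathcal{B}_{\delta_0}(P^{*})$ so that Theorem~\ref{th3}(i) applies. The base case follows from the common initialization $P^{0}\in\mathcal{P}_{0}$, the propagation of (a) uses the uniqueness argument via Lemma~\ref{lem6}, and the propagation of (b) uses the contraction estimate~(\ref{th23}) of Theorem~\ref{th3}(ii). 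Sending $i\to\infty$ gives $P^{i}\to P^{*}$ by Theorem~\ref{th2}.
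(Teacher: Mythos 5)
Your proposal is correct and follows essentially the same route as the paper: both arguments establish that, by the rank condition of Lemma~\ref{lem6}, the unique solution of the least-squares system~(\ref{lseq}) must coincide with the triple $\left(P^{i+1}, B^{\top}P^{i+1}, E^{\top}P^{i+1}\right)$ determined by the GLE~(\ref{eq6}), so that Algorithm~\ref{alg:2} reproduces Algorithm~\ref{alg:1} and Theorem~\ref{th2} yields convergence. The only difference is directional and cosmetic (the paper shows any solution of~(\ref{lseq}) satisfies the GLE and then invokes uniqueness, whereas you insert the GLE solution into~(\ref{lseq}) and invoke uniqueness), and your explicit joint induction maintaining the spectral condition $\sigma\left(\mathscr{L}_{\overline{A}^{i},A_{1}}\right)\subset\mathcal{C}_{-}$ at each step is a slightly more careful rendering of a point the paper leaves implicit.
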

\begin{proof}\rm
	Suppose that 
	\begin{equation*}	
	\Gamma:=\left[\left(\operatorname{vecs}\left(\hat{X}\right) \right) ^{\top},\left(\operatorname{vec}\left(\hat{Y}\right) \right) ^{\top},\left( \operatorname{vec}\left(\hat{ Z}\right)\right) ^{\top}	\right]^{\top}
	\end{equation*}
	satisfies 
	\begin{equation}\label{eq52}
	\Phi^{i} \Lambda-\Upsilon^{i}=0,
	\end{equation}
	where  $\hat X\in \mathcal{S}^{n}, \hat{Y} \in \mathcal{R}^{m \times n} $ 
	and $\hat{ Z}\in \mathcal{R}^{p \times n}$.
	
	By definitions of $\Phi^{i}$ and $\Upsilon^{i}$, equation (\ref{eq52}) is equivalent to
	\begin{equation}\label{eq53}
	\left[\mathcal{I}_{\tilde{x}},2\mathcal{I}_{x u},2\mathcal{I}_{x v}\right] \left[\begin{array}{c}
	\operatorname{vecs}\left( \Gamma_{1}\right)\\
	\operatorname{vec}\left(\Gamma_{2}\right)\\
	\operatorname{vec}\left(\Gamma_{3}\right)
	\end{array}\right]=0,
	\end{equation}
	where
	\begin{align}
	\Gamma_{1}=&\hat{X} \overline{A}^{i}+\left( \overline{A}^{i}\right) ^{\top} \hat{X}+A_{1}^{\top} \hat{X} A_{1}\notag\\
	&+\left( L^{i}\right) ^{\top}\left( B^{\top}\hat{X}-\hat{Y}\right) +\left( B^{\top}\hat{X}-\hat{Y}\right)^{\top}L^{i}\notag \\
	&+\left( F^{i}\right) ^{\top}\left( E^{\top}\hat{X}-\hat{ Z}\right) +\left( E^{\top}\hat{X}-\hat{ Z}\right)^{\top}F^{i}\notag\\
	&-\gamma^{2} \left( F^{i}\right) ^{\top} F^{i}+Q+\left( L^{i}\right) ^{\top}RL^{i},\label{eq541} \\
	\Gamma_{2}=&B^{\top}\hat{X}-\hat{Y},\label{eq542}\\
	\Gamma_{3}=&E^{\top}\hat{X}-\hat{ Z}.\label{eq543}
	\end{align}
	Under the rank condition in (\ref{rk}), we know that the only solution to (\ref{eq53}) is $\Gamma_{1} = 0$, $\Gamma_{2} = 0$ and
	$\Gamma_{3} = 0$. Substituting $\hat{Y}=B^{\top}\hat{X},\hat{ Z} =E^{\top}\hat{X}$
	into (\ref{eq541}), we obtain
	\begin{equation}\label{eq56}
	\begin{array}{r} 
	\hat{X} \overline{A}^{i}+\left( \overline{A}^{i}\right) ^{\top} \hat{X}+A_{1}^{\top} \hat{X} A_{1}+Q\\
	+\left( L^{i}\right) ^{\top}RL^{i}-\gamma^{2} \left( F^{i}\right) ^{\top} F^{i}=0.
	\end{array}		
	\end{equation}
	On the other hand, according to Lemma~\ref{lem6},  $\Phi^{i}$ has full column rank
	for all $i \in \mathcal{Z}_{+}$, hence the solution to (\ref{eq52}) is unique. So $\Gamma=\Xi^{i+1}$, that is, $\hat{X}=P^{i+1}$, $\hat{Y}=B^{\top}P^{i+1}$ and $\hat{ Z}=E^{\top}P^{i+1}$. Plugging these three equations into (\ref{eq56}) and noting (\ref{qi}), we have
	\begin{equation*}
	\begin{array}{r} 
	P^{i+1} \overline{A}^{i}+\left( \overline{A}^{i}\right) ^{\top} P^{i+1}+A_{1}^{\top} P^{i+1} A_{1}+Q\\+P^{i} B R^{-1} B^{\top} P^{i}
	-\gamma^{-2} P^{i}E E^{\top} P^{i}=0,
	\end{array}		
	\end{equation*} 
	which is exactly GLE (\ref{eq6}). Noting (\ref{canshu}), PI by (\ref{lseq}),  (\ref{lf31}) and (\ref{lf32}) is equivalent to (\ref{eq6}),  (\ref{lf11}) and (\ref{lf12}),  respectively. According to Theorem~\ref{th2}, the convergence is proved.\qed
\end{proof}
	It is necessary to note that  the solution obtained using Eq.~(\ref{lseq}) is not a true solution in general, but rather a least-squares estimation using the collected
	state and input data.  Because of the presence of stochastic noises, the unknown  stochastic noises will distort the state trajectories in an unpredictable way.  Furthermore,  the conditional expectations in data matrices $\Phi^{i}$ and $\Upsilon^{i}$ cannot be obtained
	exactly. In practice, we adopt numerical averages to approximate the conditional expectations and use summations to approximate the integrals, the corresponding approximations obtained are then distinguished from the original notation by adding the superscript $\hat{}$ to each of them. More specifically, if we have $\bar L$
	sample paths  $x^{(l)},l=1,2,\cdots,\bar L$  with the data collected at time $t_{j_{k}}, k =
	1, 2,\cdots, \bar K$, where  $t_{j}= t_{j_{0}}< t_{j_{1}}<\cdots < t_{j_{\bar K}}= t_{j}+\Delta t$, we approximate  $\mathbb{E}\left[\tilde{x}\left( t_{j}+\Delta t\right)  ^{\top} \Big|\mathcal{F}_{t_{j}}\right]$ in $\Delta _{\tilde{x}}$  and $
	\mathbb{E} \left[\int_{t_{j}}^{t_{j}                                  +\Delta t}  x\left( \tau\right)^{\top} \otimes  x\left( \tau\right)^{\top}    \mathrm{d} \tau\Big|\mathcal{F}_{t_{j}}\right]$ in $\mathcal{I}_{x x}$ respectively as follows
	 \begin{equation*}
	 \begin{aligned}
	 &\mathbb{E}\left[\tilde{x}\left( t_{j}+\Delta t\right)  ^{\top} \Big|\mathcal{F}_{t_{j}}\right] \\ 
	 \approx  &\frac{1}{\bar L} \sum_{l=1}^{\bar L}\tilde{x}^{(l)}\left( t_{j}+\Delta t\right) ^{\top},\\	
			&\mathbb{E} \left[\int_{t_{j}}^{t_{j}                                  +\Delta t}  x\left( \tau\right)^{\top} \otimes  x\left( \tau\right)^{\top}    \mathrm{d} \tau\Big|\mathcal{F}_{t_{j}}\right]  \\
			\approx & \frac{1}{\bar L} \sum_{l=1}^{\bar L}\left[\sum_{k=1}^{\bar K}
			\left( 	x^{(l)}(t_{j_{k}})^{\top} \otimes x^{(l)}(t_{j_{k}})^{\top}\right) \times \left( t_{j_{k}}-t_{j_{k-1}}\right) 
			\right].
		\end{aligned}	
	\end{equation*}
	The integrals in $\mathcal{I}_{x u}$ and $\mathcal{I}_{x v}$ can be obtained in the same way as the approximate.

\begin{Remark}\rm \label{re1}
	In addition to the above mentioned  error arising from the calculation  of the  least-squares solutions from the input and state data, 
 the sample estimation error and approximation error arising from the computation of conditional expectations and integrals,
	  the sources of  error also include, for example, the residual induced by an earlier termination of the iteration to numerically solve  GLE  and so on. 
\end{Remark}

  If the  the effects of  errors are not taken into account, we can obtain the convergence of Algorithm~\ref{alg:2} by proving that Algorithm~\ref{alg:2} is equivalent to Algorithm~\ref{alg:1}, see Theorem~\ref{th4} for details. However,  error  is unavoidable. Taking into account the presence of errors,  we can now present the data-driven model-free off-policy RL algorithm. By executing Algorithm~\ref{alg:3}, we can obtain an estimate $\hat{P}^{N}$ of ${P}^{*}$.
  \begin{algorithm}[htb]
  	\caption{Data-Driven Model-Free Off-Policy RL Algorithm}
  	\label{alg:3}
  	\begin{algorithmic}[1]
  		\State  Choose an initial matrix $\hat{P}^{0} \in \mathcal{P}_{0}$. Set $\hat{L}^{0} =R^{-1} B^{\top} \hat{P}^{0}$, $
  		\hat{F}^{0} =-\gamma^{-2}  E^{\top} \hat{P}^{0}$. Apply  control policies $u\left( t\right) =-\hat{L}^{0}x\left( t\right)$, $v\left( t\right)=-\hat{F}^{0}x\left( t\right)$ with exploration noises to system~(\ref{e1}) and collect the input and state data. 	
  		\State  Calculate $\hat{\mathcal{I}}_{\tilde{x}},\hat{\mathcal{I}}_{x u},\hat{\mathcal{I}}_{x v}$ until the rank condition in Lemma~\ref{lem6} is satisfied.
  		\State  Select a large enough number of iterations $N$. Set $u^{i}\left( t\right)=-\hat{L}^{i}x\left( t\right)$, $v^{i}\left( t\right)=-\hat{F}^{i}x\left( t\right)$. Let the iteration index $i= 0$. 
  		\For {$i=0,1,\cdots, N-1$}
  		\State Construct the data matrices $\hat{\Phi}^{i}$ and $\hat{\Upsilon}^{i}$. Solve  the equation $\hat{\Phi}^{i} \hat{\Xi}^{i+1}=\hat{\Upsilon}^{i}$ for $\hat{\Xi}^{i+1}$.	
  		\State  Update policies $u^{i}\left( t\right)$ and $v^{i}\left( t\right)$ simultaneously with the iterative feedback gain matrices as	
  		\begin{align*}
  		\hat{L}^{i+1} &=R^{-1} \hat{\Theta}^{i+1}_{2},\\
  		\hat{F}^{i+1} &=-\gamma^{-2}  \hat{\Theta}^{i+1}_{3}.
  		\end{align*}	
  		\EndFor \\
  		\Return  estimates $\hat{P}^{N}$, $\hat{L}^{N}$ and $\hat{F}^{N}$.
  	\end{algorithmic}
  \end{algorithm}	
\section{Robustness analysis} 
In this section, considering the effects of  errors, the robustness
of RL to errors in the learning process is  studied. We may deduce that
Algorithm~\ref{alg:3} is robust to noisy data induced by modest unknown perturbations in the system dynamics
when the initial condition is in the neighborhood of the true solution by viewing the learning processes as
dynamical systems.

Define
	\begin{equation*}
	\begin{array}{l} 
		\mathscr{R}\left(Z,L,F\right)
		:=\left[I_n,-L^{\top},-F^{\top}\right]  Z\left[I_n,-L^{\top},-F^{\top}\right]^{\top}		
	\end{array}		
\end{equation*}	
for  $Z \in \mathcal{S}^{n+m+p},L \in \mathcal{R}^{m \times n},F \in \mathcal{R}^{p \times n}$ and 
\begin{equation*}
	M(P):=\left[\begin{array}{ccc}
		Q+A^{\top} P+P A+A_{1}^{\top} P A_{1} &P B&PE \\
		B^{\top} P & R&0\\
		E^{\top} P & 0&-\gamma^{2}I_p
	\end{array}\right] 
\end{equation*}
for  $P \in \mathcal{S}^{n}$.   Taking into account the  errors due to a number of factors mentioned in Remark \ref{re1}, we  propose the following
procedure in the context of unmeasurable stochastic noise.
\begin{algorithm}[htb]		
	\begin{algorithmic}[1]		
		\caption{Robust SPU}
		\label{alg:33}
		\State  Choose an initial matrix $\hat{P}^{0} \in \mathcal{P}_{0}$. Set $\hat{L}^{0} =R^{-1} B^{\top} \hat{P}^{0}$, $
			\hat{F}^{0} =-\gamma^{-2}  E^{\top} \hat{P}^{0}$.  Let the iteration index $i= 0$.
		\State (Inexact Policy evaluation) Obtain  $\hat{M}^{i+1}=\overline {M}^{i+1}+\Delta M^{i+1} \in \mathcal{S}^{n+m+p}$ (e.g., by
		approximately evaluating the performance of $\hat{L}^{i}$ and $\hat{F}^{i}$ directly from the input and state data, see Algorithm~\ref{alg:3}) as an approximation of  $\overline{M}^{i+1}$, where $\Delta M^{i+1}$ is an error, $\overline {M}^{i+1}:=M\left(\hat{P}^{i+1}\right) $, $\hat{P}^{i+1}$ is the solution of
		\begin{equation}\label{p21}
			\mathscr{R}\left(\overline{M}^{i+1}, \hat{L}^{i},\hat{F}^{i}\right)=0.
		\end{equation}		
		\State (Policy improvement) Construct new control and disturbance gains simultaneously by
		\begin{align}
			\hat{L}^{i+1}=&  \left[ \hat{M}^{i+1}\right] _{22} ^{-1}\left[ \hat{M}^{i+1}\right] _{21},\label{lf61}\\
			\hat{F}^{i+1}=& \left[  \hat{M}^{i+1}\right] _{33} ^{-1}\left[ \hat{M}^{i+1}\right] _{31},\label{lf62}	
		\end{align}		
		where $\left[  \hat{M}^{i+1}\right] _{ij}$ is the $(i,j)$th block of the  block matrix $\hat{M}^{i+1}$.	
		\State Set $ i \leftarrow i+1 $ and go back to Step 2.
	\end{algorithmic}
\end{algorithm}

In Algorithm~\ref{alg:33}, suppose $\hat L^{0} =R^{-1} B^{\top} \hat P^{0},
\hat F^{0} =-\gamma^{-2}  E^{\top} \hat P^{0}$ and $\Delta M^0 = 0$,
where $\hat P^{0}$ is chosen such that $\hat L^{0}$ is internally stabilizing. If $\hat{L}^{i}$ is internally stabilizing,  $\left[  \hat M^{i}\right]  _{22}$ and $\left[  \hat M^{i}\right]_{33}$ are invertible for all $i \in \mathcal{Z}_{+}$
(the above
assumptions may hold under certain conditions, as shown in item (\romannumeral1) in Theorem~\ref{th6}), then from (\ref{p21}), we know that the sequence $\left\lbrace \hat P^{i}\right\rbrace_{i=0}^{\infty} $
generated by Algorithm~\ref{alg:33} satisfies
\begin{equation}\label{hatp}
			\hat P^{i+1}=\left( \mathscr{L}_{\hat{\overline{A}}^{i},A_{1}}\right)^{-1} \left( -\hat{Q}^{i}_1\right)
		+ \mathcal{D}\left(\overline{M}^{i+1}, \Delta M^{i+1}\right),
\end{equation}
where 
\begin{equation*}
	\begin{array}{l} 
		\mathcal{D}\left(\overline{M}^{i+1}, \Delta M^{i+1}\right) \\
		=\left( \mathscr{L}_{A-B\hat L^{i}-E\hat F^{i},A_{1}}\right)^{-1} \left(- \hat{Q}_{2}^{i}\right)
		-\left( \mathscr{L}_{\hat{\overline{A}}^{i},A_{1}}\right)^{-1} \left(- \hat{Q}^{i}_1\right)
	\end{array}
\end{equation*}
with $\hat{\overline{A}^{i}}:= \mathscr{A}(\hat P^{i})$ and
\begin{equation*}
	\begin{split}			
		\hat{Q}^{i}_1:=&Q+\hat P^{i} B R^{-1} B^{\top} \hat P^{i}
		-\gamma^{-2} \hat P^{i}E E^{\top} \hat P^{i},\\
		\hat{Q}^{i}_2:=&Q+\left( \hat L^{i}\right) ^{\top}R\hat L^{i}-\gamma^{2} \left( \hat F^{i}\right) ^{\top} \hat F^{i}.\\
	\end{split} 	
\end{equation*}
Here, the dependence of $\mathcal{D}$ on $\overline{M}^{i+1}$ and  $\Delta M^{i+1}$ comes from (\ref{lf61}) and (\ref{lf62}). If $\hat{P}^{i}$ is viewed as the state, $\Delta M^{i}$ is viewed  as the disturbance input, then the next theorem is derived based on  Theorem~\ref{th3} and shows that discrete-time nonlinear dynamical
system~(\ref{hatp}) is locally input-to-state stable. The proof is
given in Appendix B.
\begin{theorem}\rm  \label{th6}  
	For $\varepsilon_0 $ and $\delta_0(\varepsilon_0)$ in Theorem~\ref{th3},  there exists a
	$\delta_1(\delta_0)>0$  such that if $\|\Delta M\|_{l_{\infty}}	:= \sup _{\substack{i \in \mathcal{Z}_{+} }} \|\Delta M^{i}\|_{F}$\\ $<\delta_1$, $\hat P^{0} \in \mathcal{B}_{\delta_0}\left(P^{*}\right)$, we have the following conclusions:
	\begin{itemize} 
		\item[(\romannumeral1)] $\sigma\left( \mathscr{L}_{A-B\hat L^{i}-E\hat F^{i},A_{1}}\right) \subset \mathcal{C}_{-}$, $\left[  \hat M^{i}\right]  _{22}$ and $\left[  \hat M^{i}\right]_{33}$ are invertible, for all $i \in \mathcal{Z}_{+}$.
		\item[(\romannumeral2)] The following local input-to-state stability holds:
		\begin{equation*}
			\|P^{i}-P^{*}\|_{F} \leq\alpha \left(\left\|\hat{P}^{0}-P^{*}\right\|_{F},i\right)+\beta \left( \|\Delta M\|_{l_{\infty}}\right),
		\end{equation*}
		where  $\alpha \left( s,i\right) =\varepsilon_{0}^{i}s \in \mathcal{KL}$, $\beta \left( s\right) =\frac{k_{3}}{1-\varepsilon_{0}}s \in \mathcal{K}$, $s\in \mathcal{R}$ and $k_{3}(\delta_0)>0$.
		\item[(\romannumeral3)] $\textstyle \lim_{i \to \infty}\|\Delta M^{i}\|_{F}=0 \Rightarrow   \textstyle \lim_{i \to \infty}\|\hat P^{i}-P^{*}\|_{F}=0$.
	\end{itemize} 
\end{theorem}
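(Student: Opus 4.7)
The plan is to prove all three conclusions jointly by induction on $i$, exploiting the decomposition~(\ref{hatp}) which splits $\hat{P}^{i+1}$ into an ``exact Newton step'' from $\hat{P}^{i}$ plus a perturbation term $\mathcal{D}(\overline{M}^{i+1},\Delta M^{i+1})$ that vanishes whenever $\Delta M^{i+1}=0$. The running induction hypothesis at step $i$ will be that $\hat{P}^{i}\in\mathcal{B}_{\delta_0}(P^{*})$, which is precisely what lets me invoke Theorem~\ref{th3} on the ``nominal'' Newton update $(\mathscr{L}_{\hat{\overline{A}}^{i},A_{1}})^{-1}(-\hat{Q}_{1}^{i})$ at each iteration.

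For part~(i), I would argue at each inductive step as follows. Because $[\overline{M}^{i+1}]_{22}=R\succ 0$ and $[\overline{M}^{i+1}]_{33}=-\gamma^{2}I_{p}\prec 0$, a standard perturbation estimate shows that once $\|\Delta M^{i+1}\|_{F}$ is sufficiently small these blocks of $\hat{M}^{i+1}$ remain invertible, so $\hat{L}^{i+1}$ and $\hat{F}^{i+1}$ in~(\ref{lf61})--(\ref{lf62}) are well defined. The mean-square stability of $(A-B\hat{L}^{i}-E\hat{F}^{i},A_{1})$ then follows from Theorem~\ref{th3}(i) applied to $\hat{P}^{i}\in\mathcal{B}_{\delta_0}(P^{*})$ together with continuity of the spectrum of $\mathscr{L}_{\cdot,A_{1}}$ in its coefficients, provided the extra perturbation introduced by $\Delta M^{i}$ on the previous gains $\hat{L}^{i},\hat{F}^{i}$ is small; the latter is controlled by the same $\delta_1$.

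For part~(ii), I would apply the triangle inequality to~(\ref{hatp}). The first summand is an exact Newton step from $\hat{P}^{i}$, so~(\ref{th23}) yields $\|(\mathscr{L}_{\hat{\overline{A}}^{i},A_{1}})^{-1}(-\hat{Q}_{1}^{i})-P^{*}\|_{F}\leq\varepsilon_{0}\|\hat{P}^{i}-P^{*}\|_{F}$. The second summand $\mathcal{D}$ is a difference of two inverted generalized Lyapunov operators applied to closely related data depending linearly on $\Delta M^{i+1}$ through $\hat{L}^{i+1},\hat{F}^{i+1}$; using Lemma~\ref{lem2} to rewrite the inverses via the $\mathscr{H}$-representation and then invoking Proposition~\ref{pro:1}, I would obtain a bound $\|\mathcal{D}(\overline{M}^{i+1},\Delta M^{i+1})\|_{F}\leq k_{3}\|\Delta M^{i+1}\|_{F}$ for a constant $k_{3}>0$ that depends only on $\delta_0$ (through uniform bounds on the inverted operators in $\mathcal{B}_{\delta_0}(P^{*})$). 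The two pieces combine into the linear recursion
\begin{equation*}
\|\hat{P}^{i+1}-P^{*}\|_{F}\leq\varepsilon_{0}\|\hat{P}^{i}-P^{*}\|_{F}+k_{3}\|\Delta M^{i+1}\|_{F},
\end{equation*}
which unrolls directly to the claimed $\mathcal{KL}+\mathcal{K}$ bound with $\alpha(s,i)=\varepsilon_{0}^{i}s$ and $\beta(s)=k_{3}s/(1-\varepsilon_{0})$.

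The main obstacle, and the step that fixes $\delta_1$, is closing the induction by guaranteeing that the iterates never leave $\mathcal{B}_{\delta_0}(P^{*})$, since Theorem~\ref{th3} is only a local result. Choosing $\delta_1\leq(1-\varepsilon_{0})\delta_0/k_{3}$ ensures $\varepsilon_{0}\delta_0+k_{3}\delta_1\leq\delta_0$, so that $\hat{P}^{i}\in\mathcal{B}_{\delta_0}(P^{*})$ propagates to $\hat{P}^{i+1}\in\mathcal{B}_{\delta_0}(P^{*})$; this simultaneously validates the perturbation arguments in~(i). Finally, part~(iii) is a standard vanishing-input consequence of~(ii): for any $\eta>0$ choose $N_{0}$ with $\|\Delta M^{i}\|_{F}<\eta$ for $i>N_{0}$, iterate the recursion from $N_{0}$ to get $\limsup_{i\to\infty}\|\hat{P}^{i}-P^{*}\|_{F}\leq k_{3}\eta/(1-\varepsilon_{0})$, and let $\eta\to 0$.
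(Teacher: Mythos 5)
Your proposal is correct and follows essentially the same route as the paper: the paper likewise splits (\ref{hatp}) into the exact step handled by Theorem~\ref{th3} plus the perturbation $\mathcal{D}$, bounds $\left\|\mathcal{D}\right\|_{F}\leq k_{3}\left\|\Delta M^{i}\right\|_{F}$ via Lemma~\ref{lem2} and Proposition~\ref{pro:1} (its Lemma~\ref{lem10}), closes the induction with exactly the choice $k_{3}\delta_{1}<(1-\varepsilon_{0})\delta_{0}$, and finishes (\romannumeral3) by the same vanishing-input argument. The only detail you gloss over is that the stability radius around $\mathscr{A}(\hat{P}^{i})$ used for part (\romannumeral1) must be uniform in $\hat{P}^{i}$ (so that a single $\delta_{1}$ works for every iteration); the paper secures this in Lemma~\ref{lem9} by a compactness argument over $\overline{\mathcal{B}}_{\bar{\delta}_{0}}\left(P^{*}\right)$.
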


Evidently, Theorem~\ref{th6} states that in Algorithm~\ref{alg:33}, if $\hat P^0$ is close to $P^*$  and the error $\Delta M$ has a small  $l_{\infty}$-
norm, the cost of the produced policies is not greater than a constant proportionally to the error's $l_{\infty}$-norm. The smaller the error, the better the final policies developed. 
In other words,  Algorithm~\ref{alg:33} is not sensitive to small disturbances when the initial condition is in a neighbourhood
of the true solution. In terms of Algorithm~\ref{alg:3}, it is a specific method to construct estimation $\hat M^{i}$ in Algorithm~\ref{alg:33} directly from input
and state data. We may deduce from Theorem~\ref{th6} that Algorithm~\ref{alg:3}, a data-driven version of Algorithm~\ref{alg:33}, is robust
to multiplicative noise in the system dynamics.
\begin{Remark}\rm
	The proposed algorithms are shown to converge only for initial solution satisfying $P^{0} \in \mathcal{P}_{0}$ or $\hat{P}^{0} \in \mathcal{P}_{0}$, which may be unavailable in some cases. It should be noted that, in theory, the restriction $P^{0} \in \mathcal{P}_{0}$ and $\hat{P}^{0} \in \mathcal{P}_{0}$ can be removed and the proposed algorithms converge for all  stabilizing initial solutions, as in the convergence results obtained in \cite{pang2022reinforcement} for model-free algorithm posed to the stochastic optimal control problem,  and a similar process of deriving convergence conclusions for the stochastic  $H_{\infty}$ control  problem is part of our subsequent research.  In practice, we directly utilized this theoretical result by choosing the appropriate initial solution to make it a stabilizing solution. Since the system model is completely
	known for Algorithm~\ref{alg:1}, a practical method is also to select the matrix $P^{0}$ such that $\sigma\left( \mathcal{L}_{\overline{A}^{0},A_{1}}\right) \subset \mathcal{C}_{-}$. Thus, if the open-loop system (i.e., $u = 0$ and
	$v = 0$) is stable, we can simply select $P^{0}= 0$. For  Algorithm~\ref{alg:3}, since the system information is unknown, we can observe the  trend of state trajectories to select the appropriate $\hat{P}^{0}$.  Set $\hat{L}^{0} =R^{-1} B^{\top} \hat{P}^{0}$, $
	\hat{F}^{0} =-\gamma^{-2}  E^{\top} \hat{P}^{0}$. It is worth noting that this setup only presents the theoretical relationship between $\hat{L}^{0}$,$
	\hat{F}^{0}$ and $\hat{P}^{0}$, respectively; both $B$ and $E$ are unknown at the time of implementing Algorithm~\ref{alg:3}. We pick  $\hat{L}^{0}$ and $\hat{F}^{0}$ at random to apply to  system~(1)
	and observe the  trend of state trajectories. If there exist $\hat{L}^{0}$ and $\hat{F}^{0}$ such that the associated  state trajectories  go to a neighborhood of zero as time $t$ becomes sufficiently large, then $\hat{L}^{0}$ and $\hat{F}^{0}$ can be chosen as the initial admissible feedback gain matrices,  which also implies a proper selection of $\hat{P}^{0}$.
	We can also simply select $\hat{P}^{0}= cI_n$, where
	$c\ge 0$ is some given scalar. Initially, we can run Algorithm~\ref{alg:3} with $c= 0$. If Algorithm~\ref{alg:3} does not converge to a positive-definite matrix solution, then, increase $c$ gradually until the Algorithm~\ref{alg:3} converges to a positive-definite matrix solution.    It
	should be pointed out that the methods presented here for the choice of $\hat{P}^{0}$  in Algorithm~\ref{alg:3} are on the basis of experience, and this
	issue will also be pursued in future work.	
\end{Remark}
	\section{Numerical simulation}
 In this section, the performance of the proposed model-free Algorithm~\ref{alg:3} is investigated and compared with the model-based Algorithm 1. 
Consider a deterministic  continuous-time system model
of the F-16 aircraft plant studied in \cite{stevens2015aircraft}, and assume that
it is perturbed by state-dependent multiplicative noise. Then the perturbed F-16 aircraft plant can be described by system~(\ref{e1}) with matrices $A,B,E$ given in \cite{stevens2015aircraft} and 
\begin{equation*}
	A_{1}=\left[\begin{array}{ccc}
		0 & 0 & 0 \\
		-0.25 & 0.25 & 0\\
		0 & 0 & 0 	
	\end{array}\right].	
\end{equation*}
The performance index coefficients  are selected as $Q=I_{3}$, $R=I_{1}$ and $\gamma=5$.  

It is worthy pointing out that Algorithm~\ref{alg:3} is applied without knowing all the information about system~(\ref{e1}). We  pick  $\hat{L}^0$ and $\hat{F}^0$ at random to apply to the system
and  observe the  trend of state trajectories when time $t$ becomes sufficiently large
 to find the initial admissible feedback gains. We find $\hat{L}^0=[0.3976, -1.1913, 0.6625]$ and $\hat{F}^0=[0.4749, 1.3719, 0.3130]$ can make the state trajectories tend to  a neighborhood of zero, therefore, we choose them as the initial
feedback gains.  Set  the initial system state to $x_0=[0.1, 0.1, 0.1]^{\top}$ and then apply the chosen $\hat{L}^0$ and $\hat{F}^0$ with exploration noises to system~(\ref{e1}) to generate $\bar L=50$ sample paths for data collection.  Set  the length of integral interval
 to $\Delta t\ = 0.5$ and divide each integration interval $\bar K=100$ equal parts. Using the data collected,   data matrices  $\hat{\Phi}^{i}$ and  $\hat{\Upsilon}^{i}$ are calculated and then Algorithm~\ref{alg:3} is implemented.  The algorithm is terminated after  $N = 20$ iterations and then we use the results in the last iteration of Algorithm~\ref{alg:3} as the estimation of $P^*,L^*,F^*$, the corresponding state trajectories are shown in Fig.~1. 
\begin{figure}[htb]
	\begin{center}
		\includegraphics[height=4cm]{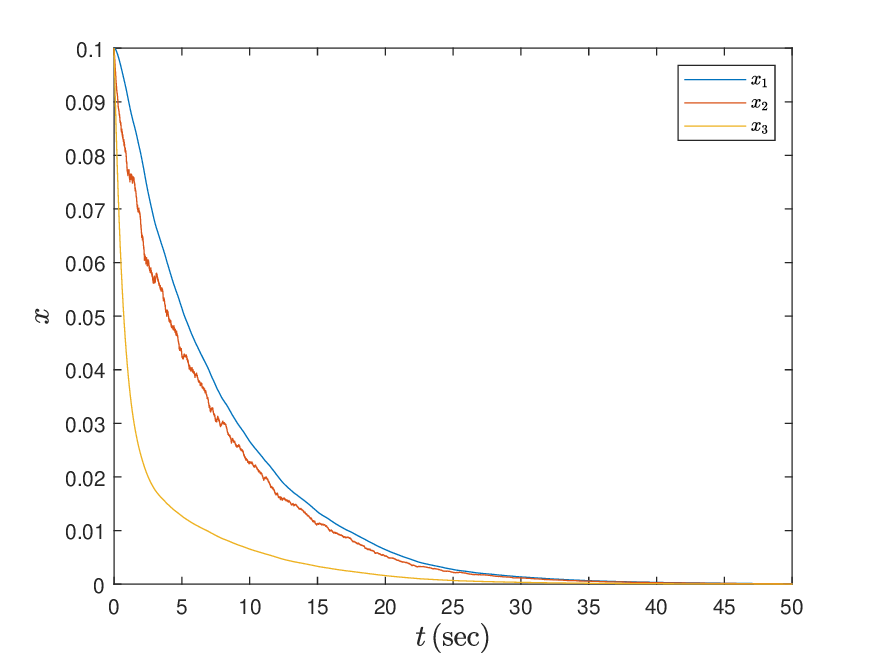}    % The printed column  
		\caption{State trajectories of the closed-loop F-16 aircraft system.}  % width is 8.4 cm.
		\label{fig3}                                 % Size the figures 
	\end{center}                                 % accordingly.
\end{figure}
 
 Now, we compare the model-free Algorithm~\ref{alg:3} with the model-based  Algorithm~\ref{alg:1}.  According to Theorem~\ref{th2},  we perform the model-based Algorithm \ref{alg:1} for a sufficiently large number of iterations and
 utilize the results in the last iteration 
 \begin{equation*}
 P^{N+1}=\left[\begin{array}{ccc}
 1.6908&1.3700&-0.1647\\
 1.3700&1.6833&-0.1816\\
 -0.1647&-0.1816&0.4372
 \end{array}\right]	
 \end{equation*}
 as an approximate value of $P^*$.
   To check whether $P^{N+1}$ is the solution of GARE~(\ref{gare}), $\mathscr{F}(P^{i+1})$ which is defined in (\ref{eq7})  is used to determine the
   distance from $P^{N+1}$ to the true solution $P^{*}$ of GARE~(\ref{gare}). When we insert $P^{N+1}$ into (\ref{eq7}), we get
\begin{equation*}
\mathscr{F}(P^{N+1})=10^{-16}\times\left[\begin{array}{ccc}
-4.4409&	6.8001&	0.0173\\
2.6368&	1.1102&	-1.3010\\
-1.0929&	0.8153&	1.1102
\end{array}\right].	
\end{equation*}
Since $\left\|\mathscr{F}(P^{N+1})\right\|_{F}=10^{-16}\times8.8844$ is small enough, we can utilize the results in the last iteration 
as  the optimal values $P^*$,$L^*$ and $F^*$. That is, they can be viewed as the benchmark for the simulation results of Algorithm~\ref{alg:3}.  Fig.~2 depicts the trajectories of the norms of the differences between $\hat P^i,\hat L^i,\hat F^i$ produced by Algorithm~\ref{alg:3}
and the optimal values  $P ^*,L^*,F^*$.
\begin{figure}[htb]
	\begin{center}
		\includegraphics[height=4cm]{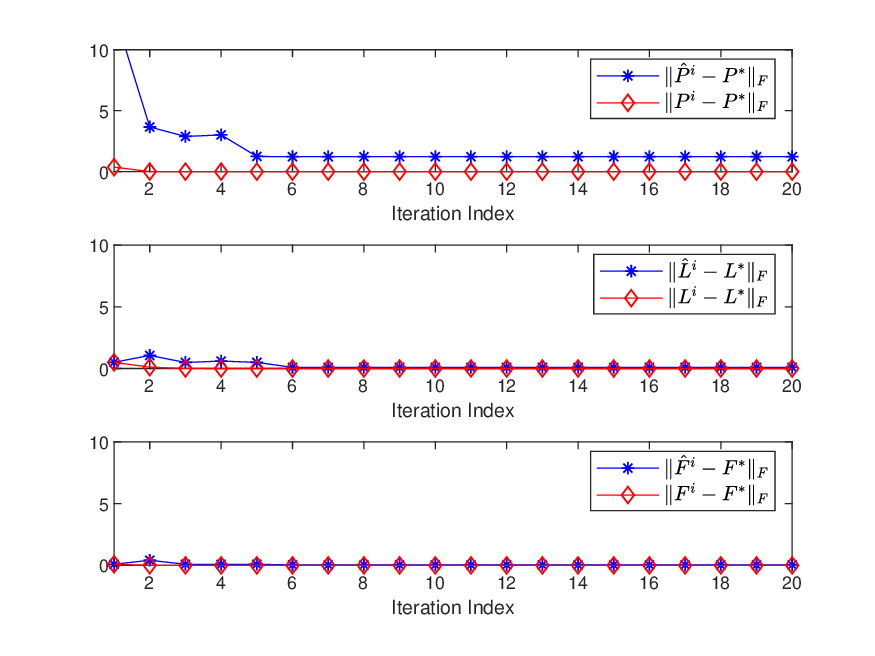}    % The printed column  
		\caption{The norms obtained by Algorithm~\ref{alg:1} and Algorithm~\ref{alg:3}.}  % width is 8.4 cm.
		\label{fig2}                                 % Size the figures 
	\end{center}                                 % accordingly.
\end{figure}

 The above comparison shows that, although the  error $\hat M^i$ induced by the unmeasurable stochastic noise in the system dynamics distorts  the
trajectories generated by Algorithm~\ref{alg:3} from the precise trajectories generated by model-based Algorithm~\ref{alg:1}, Algorithm~\ref{alg:3} still  successfully finds a near-optimal  control policy. This corresponds to the convergence conclusions achieved in Theorem~\ref{th6}.
\section{Conclusions}
	A data-driven off-policy RL method has been developed to
solve  stochastic $H_{\infty}$ control problem of continuous-time It\^o system with
unknown system models. Based on the model-based
SPU algorithm, an off-policy RL method is derived, which can learn
the solution of GARE from the system data generated
by arbitrary control and disturbance signals. The effectiveness of the proposed model-free off-policy RL method is verified by a stochastic linear F-16 aircraft  system. 
\section*{Appendix}
\appendix
\section{Proof of Theorem~\ref{th3}}
\begin{proof}\rm
	(\romannumeral1) Since $P^{*} \succeq 0$ is the stabilizing solution to  GARE~(\ref{gare}), one has $\sigma\left( \mathscr{L}_{\overline{A}^{*},A_{1}}\right) \subset \mathcal{C}_{-}$ according to Lemma~\ref{lem1}, where $\overline{A}^{*} :=\mathscr{A}(P^{*})$. It was shown in \cite{horn2012matrix} that the eigenvalues of a square matrix depend
	on the matrix elements continuously, thus there always exists a $\bar{\delta}_{0}>0$ such that $\sigma\left( \mathscr{L}_{\overline{A}^{i},A_{1}}\right) \subset \mathcal{C}_{-}$ for all $P^{i} \in \overline{\mathcal{B}}_{\bar{\delta}_{0}}\left(P^{*}\right)$, where $\overline{\mathcal{B}}_{\bar{\delta}_{0}}\left(P^{*}\right)$ is the closure of $\mathcal{B}_{\bar{\delta}_{0}}\left(P^{*}\right)$. 	
	
	(\romannumeral2) Suppose $P^{i} \in \overline{\mathcal{B}}_{\bar{\delta}_{0}}\left(P^{*}\right)$ for all $i \in \mathcal{Z}_{+}$.
	According to (\ref{eq6}), the sequence
	$\left\lbrace P^{i}\right\rbrace_{i=0}^{\infty} $ generated by Algorithm~\ref{alg:1} satisfies
	\begin{equation}\label{lp1}
	\mathscr{L}_{\overline{A}^{i},A_{1}}\left(P^{i+1}\right)=-\overline{Q}^{i}.
	\end{equation}
	Based on item 	(\romannumeral1)  of Theorem~\ref{th3}, we have that $\sigma\left( \mathscr{L}_{\overline{A}^{i},A_{1}}\right)$ does not contain zero eigenvalues. According to Lemma~\ref{lem2}, $\mathcal{H}\left(\overline{A}^{i},A_{1},H_{n}\right)$ is invertible, and then  GLE
	(\ref{eq6}) has a unique real symmetric solution (\ref{pp1}).
	
	Subtracting $P^{*} B R^{-1} B^{\top} P^{i}-\gamma^{-2} P^{*} EE^{\top} P^{i}+P^{i} B R^{-1}\\ B^{\top} P^{*}-\gamma^{-2} P^{i}EE^{\top} P^{*}$ from the both sides of GARE~(\ref{gare}) with $P=P^{*}$, one has
	\begin{equation*}
	\mathscr{L}_{\overline{A}^{i},A_{1}}\left(P^{*}\right)= {P}^{*} \overline{A}^{i}+\left( \overline{A}^{i}\right) ^{\top} {P}^{*}+A_{1}^{\top} {P}^{*} A_{1}=-\overline{Q}^{*},
	\end{equation*}
	where 
	\begin{equation*}
	\begin{aligned}
	\overline{Q}^{*}= & Q+P^{i} B R^{-1} B^{\top} P^{i} -\gamma^{-2} P^{i}EE^{\top} P^{i}\\
	& -\left(P^{i}-P^{*}\right) B R^{-1} B^{\top}\left(P^{i}-P^{*}\right)\\
	&+\gamma^{-2} \left(P^{i}-P^{*}\right)EE^{\top}\left(P^{i}-P^{*}\right).
	\end{aligned}
	\end{equation*}
	Similar to the process above, we have
	\begin{equation}\label{lp4}
	P^{*}=\left( \mathscr{L}_{\overline{A}^{i},A_{1}}\right)^{-1} \left( -\overline{Q}^{*}\right).
	\end{equation}
	Subtracting (\ref{lp4}) from (\ref{pp1}), we have
	\begin{equation*}\begin{array}{l} 
	\operatorname{vecs}\left( P^{i+1}-P^{*}\right)\\
	=\mathcal{H}\left(\overline{A}^{i},A_{1},H_{n}\right)^{-1}
	\operatorname{vecs}\left(\gamma^{-2}\left(P^{i}-P^{*}\right) EE^{\top}\left(P^{i}-P^{*}\right)\right.\\
	\left.-\left(P^{i}-P^{*}\right) B R^{-1} B^{\top}\left(P^{i}-P^{*}\right)\right).
	\end{array}		
	\end{equation*}
	Taking the Euclidean norm for vectors on both sides of the above equation, we have
	\begin{equation*}
	\begin{array}{l} 
	\left\|P^{i+1}-P^{*}\right\|_{F}
	\leq\left\|\mathcal{H}\left(\overline{A}^{i},A_{1},H_{n}\right)^{-1}\right\|_{F}\\
	\left\|P^{i}-P^{*}\right\|_{F}^{2}
	\left[ \gamma^{-2}\left\|EE^{\top}\right\|_{F}+ \left\|B R^{-1} B^{\top}\right\|_{F}\right] .	
	\end{array}	
	\end{equation*}
	By the invertibility of $\mathcal{H}\left(\overline{A}^{i},A_{1},H_{n}\right)$, there exists a $k_{1}>0$ such that  $\left\|\mathcal{H}\left(\overline{A}^{i},A_{1},H_{n}\right)^{-1}\right\|_{F} \leq k_{1}$  for all  $P^{i} \in \overline{\mathcal{B}}_{\bar{\delta}_{0}}\left(P^{*}\right)$. Then (\ref{th22}) is proved with $k_{0}=k_{1}\left(\gamma^{-2}\left\|EE^{\top}\right\|_{F}+ \left\|B R^{-1} B^{\top}\right\| _{F}\right)$. For any $\varepsilon_{0} \in(0,1)$, there exists a $\delta_{0}\in \left(0, \bar{\delta}_{0}\right]$ such that $k_{0} \delta_{0} \leq \varepsilon_{0}$, which proves (\ref{th23}).	\qed
\end{proof}
\section{Proof of Theorem~\ref{th6}}
We show several supplementary lemmas before showing Theorem~\ref{th6}.
\begin{lemma}\rm \label{lem9}
	For all $\hat P^{i} \in \mathcal{B}_{\delta_0}\left(P^{*}\right),i \in \mathcal{Z}_{+}$, there exists a $d(\delta_0) > 0$ that is not dependent on  $ \hat{P}^{i}$, such that if $\|\Delta M^{i}\|_{F}\leq d$, we have that 
	$\sigma\left( \mathscr{L}_{A-B\hat L^{i}-E\hat F^{i},A_{1}}\right) \subset \mathcal{C}_{-}$ and that $\left[  \hat M^{i}\right]  _{22}$ and $\left[  \hat M^{i}\right]_{33}$ are invertible.	
\end{lemma}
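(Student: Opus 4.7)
The plan is to exploit the very simple block structure of the matrix $M(P)$: the $(2,2)$ block equals $R$ and the $(3,3)$ block equals $-\gamma^{2}I_{p}$, independently of $P$. Thus $[\overline{M}^{i}]_{22}=R$ and $[\overline{M}^{i}]_{33}=-\gamma^{2}I_{p}$ for every $i$, and the hypothesis $D^{\top}D=R\succ 0$ together with $\gamma>0$ makes both blocks invertible with an a priori lower bound on the smallest singular value that does not depend on $\hat P^{i}$. Since $[\hat M^{i}]_{22}=R+[\Delta M^{i}]_{22}$ and $[\hat M^{i}]_{33}=-\gamma^{2}I_{p}+[\Delta M^{i}]_{33}$, a direct application of the Banach perturbation (Neumann series) argument shows that there exists $d_{1}(\delta_{0})>0$ such that whenever $\|\Delta M^{i}\|_{F}\le d_{1}$, both blocks are invertible and their inverses are uniformly norm-bounded.

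Next I would control the gains. Using Proposition~\ref{pro:1} applied to
$\hat L^{i}=[\hat M^{i}]_{22}^{-1}[\hat M^{i}]_{21}$ versus $L^{*}=R^{-1}B^{\top}P^{*}$
(and similarly for $\hat F^{i}$ versus $F^{*}=-\gamma^{-2}E^{\top}P^{*}$), one obtains an inequality of the schematic form
\begin{equation*}
\|\hat L^{i}-L^{*}\|_{F}+\|\hat F^{i}-F^{*}\|_{F}\le c_{1}\bigl(\|\hat P^{i}-P^{*}\|_{F}+\|\Delta M^{i}\|_{F}\bigr)
\end{equation*}
for some constant $c_{1}(\delta_{0})>0$, valid as long as $\|\Delta M^{i}\|_{F}\le d_{1}$. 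Consequently
$A-B\hat L^{i}-E\hat F^{i}$ can be made arbitrarily close (in Frobenius norm) to $\mathscr{A}(P^{*})=\overline{A}^{*}$ by shrinking both $\delta_{0}$ and $\|\Delta M^{i}\|_{F}$.

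Finally, because $P^{*}$ is the stabilizing solution of GARE~(\ref{gare}), Lemma~\ref{lem1} guarantees $\sigma(\mathscr{L}_{\overline{A}^{*},A_{1}})\subset\mathcal{C}_{-}$. Exactly as invoked in the proof of item (\romannumeral1) of Theorem~\ref{th3}, the eigenvalues of $\mathscr{L}_{A-B\hat L^{i}-E\hat F^{i},A_{1}}$ depend continuously on the coefficient matrix (cf. \cite{horn2012matrix}), so there exists $\eta>0$ such that $\|A-B\hat L^{i}-E\hat F^{i}-\overline{A}^{*}\|_{F}<\eta$ implies $\sigma(\mathscr{L}_{A-B\hat L^{i}-E\hat F^{i},A_{1}})\subset\mathcal{C}_{-}$. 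Combining this with the bound above, I choose $d=\min\{d_{1},(\eta/c_{1})-\delta_{0}\}$ (shrinking $\delta_{0}$ first if necessary so that $c_{1}\delta_{0}<\eta$), yielding the desired $d(\delta_{0})>0$.

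The main obstacle is bookkeeping rather than any deep idea: the constants $c_{1}$, $\eta$, and $d_{1}$ all depend on $\delta_{0}$ through norm bounds on $\hat P^{i}$, so I must verify that they can be chosen uniformly in $i$ and in $\hat P^{i}\in\mathcal{B}_{\delta_{0}}(P^{*})$. This uniformity follows from compactness of $\overline{\mathcal{B}}_{\delta_{0}}(P^{*})$ together with the explicit constant-block structure of $M(\cdot)$, but stating the argument cleanly requires laying out the chain of perturbation bounds in the correct order.
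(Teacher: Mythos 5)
Your proposal is correct, but it differs from the paper's proof in two respects worth noting. For the invertibility of $\left[\hat M^{i}\right]_{22}$ and $\left[\hat M^{i}\right]_{33}$, you exploit the fact that the unperturbed blocks are the \emph{constant} matrices $R$ and $-\gamma^{2}I_{p}$ and run a Neumann-series perturbation; the paper merely invokes ``continuity of the matrix inversion.'' Your version is sharper and makes the uniformity in $\hat P^{i}$ (and $i$) automatic, with an explicit admissible threshold on $\|\Delta M^{i}\|_{F}$. For the spectral condition, however, the two arguments are organized differently: you perturb around the single matrix $\overline{A}^{*}=\mathscr{A}(P^{*})$ and need the total deviation $c_{1}\bigl(\|\hat P^{i}-P^{*}\|_{F}+\|\Delta M^{i}\|_{F}\bigr)$ to fall below one threshold $\eta$, which forces you to possibly shrink $\delta_{0}$ so that $c_{1}\delta_{0}<\eta$. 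The paper instead first establishes $\sigma\bigl(\mathscr{L}_{\mathscr{A}(\hat P^{i}),A_{1}}\bigr)\subset\mathcal{C}_{-}$ for every $\hat P^{i}$ in the closed ball (as in item (\romannumeral1) of Theorem~\ref{th3}), then uses compactness of $\overline{\mathcal{B}}_{\bar{\delta}_{0}}\left(P^{*}\right)$ and continuity of $\mathscr{A}(\cdot)$ to extract a \emph{uniform} stability radius $\bar r$ around each $\mathscr{A}(\hat P^{i})$, so that only the deviation induced by $\Delta M^{i}$ (through the continuity of (\ref{lf61})--(\ref{lf62}) in $\hat M^{i}$) must be controlled; the given $\delta_{0}$ is left untouched. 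Your route is more elementary (no compactness extraction), and the shrinking of $\delta_{0}$ is harmless here since Theorem~\ref{th3} holds for any smaller radius, but strictly speaking it proves the lemma for a possibly smaller ball than the one fixed in the statement, whereas the paper's compactness argument delivers the conclusion uniformly over the originally given neighborhood.
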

\begin{proof}\rm
	By the same route as item (\romannumeral1) in Theorem~\ref{th3}, it can be shown that  $\sigma\left( \mathscr{L}_{\mathscr{A}(\hat P^i),A_{1}}\right) \subset \mathcal{C}_{-}$ for all $\hat P^{i} \in \overline{\mathcal{B}}_{\bar{\delta}_{0}}\left(P^{*}\right)$. Because $\mathscr{A}(P)$ is a continuous function of $P$ and $\overline{\mathcal{B}}_{\bar{\delta}_{0}}\left(P^{*}\right)$ is a compact set,
	the set $\mathcal{A}:= \left\lbrace \mathscr{A}(\hat P^i)|\hat P^i \in \overline{\mathcal{B}}_{\bar{\delta}_{0}}\left(P^{*}\right)\right\rbrace $ is also compact. According to the continuity, for each $\hat P^{i} \in \overline{\mathcal{B}}_{\bar{\delta}_{0}}\left(P^{*}\right)$, there exists a constant $r> 0$ that depends on $\mathscr{A}(\hat P^i)$, such that $\sigma\left( \mathscr{L}_{Y,A_{1}}\right) \subset \mathcal{C}_{-}$ for any $Y \in \mathcal{B}_{r}\left( \mathscr{A}(\hat P^i)\right)$. According to the compactness of $\mathcal{A}$, for all $\hat P^{i} \in \overline{\mathcal{B}}_{\bar{\delta}_{0}}\left(P^{*}\right)$, there exists a $\bar r > 0$ that independs on $\mathscr{A}(\hat P^i)$, such that each  $Y \in \mathcal{B}_{\bar r}\left(\mathscr{A}\left(\hat{P}^{i}\right)\right)$
	satisfies  $\sigma\left( \mathscr{L}_{Y,A_{1}}\right) \subset \mathcal{C}_{-}$. It is worth noting that in (\ref{lf61}) and (\ref{lf62}), the improved policies  $\hat L^{i}$ and $\hat F^{i}$ are continuous functions of $\hat M^{i}$. Hence there exists a $d_{1}>0$ such that if  $\left\|\Delta M^{i}\right\|_{F} \leq d_{1}$ hods, one has $\left(A-B \hat{L}^{i}-E \hat{F}^{i}\right) \in \mathcal{B}_{\bar R}\left(\mathscr{A}\left(\hat{P}^{i}\right)\right)$, and furthermore,  $\sigma\left( \mathscr{L}_{A-B\hat L^{i}-E\hat F^{i},A_{1}}\right) \subset \mathcal{C}_{-}$  for all $\hat{P}^{i} \in \overline{\mathcal{B}}_{\bar{\delta}_{0}}\left(P^{*}\right)$. According to the continuity of the matrix inversion,
	there exists a $d_2 > 0$ such that  $\left[  \hat M^{i}\right]  _{22}$ and $\left[  \hat M^{i}\right]_{33}$ are invertible if $\|\Delta M^{i}\|_{F} \leq d_1$. Letting $d=\min \left(d_{1}, d_{2}\right)$ accomplishes the proof.\qed
\end{proof}
According to  Lemma~\ref{lem9}, if  $\|\Delta M\|_{l_{\infty}} \leq d$, then the sequence  $\left\{\hat{P}^{i}\right\}_{i=0}^{\infty}$  satisfies (\ref{hatp}). The next lemma provides
the upper bound of $\left\|\mathcal{D}\left(\overline{M}^{i},\Delta M^{i}\right)\right\|_{F}$.
\begin{lemma}\rm \label{lem10}
	For any $ k_{2}>0$,  there exist   $\delta_{1}^{1}\left(\delta_{0},k_{2}\right) \in (0,d]$ that is independent of $ \hat{P}^{i}$ and $k_{3}\left(\delta_{0}\right)>0$, such that if $\left\|\Delta M^{i}\right\|_{F}<\delta_{1}^{1}$, one has	
	\begin{equation*}
		\left\|\mathcal{D}\left(\overline{M}^{i}, \Delta M^{i}\right)\right\|_{F} \leq k_{3}\left\|\Delta M^{i}\right\|_{F}<k_{2}
	\end{equation*}	
	for all $\hat{P}^{i} \in \mathcal{B}_{\delta_{0}}\left(P^{*}\right)$, where $d$ is defined in Lemma~\ref{lem9}. 
\end{lemma}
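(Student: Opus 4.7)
The key observation is that $\mathcal{D}(\overline{M}^{i},\Delta M^{i})$ vanishes whenever $\Delta M^{i}=0$: the blocks $[\hat{M}^{i}]_{22}=R$, $[\hat{M}^{i}]_{21}=B^{\top}\hat{P}^{i}$, $[\hat{M}^{i}]_{33}=-\gamma^{2}I_{p}$, $[\hat{M}^{i}]_{31}=E^{\top}\hat{P}^{i}$ are then exact, so (\ref{lf61})--(\ref{lf62}) recover $\hat{L}^{i}=R^{-1}B^{\top}\hat{P}^{i}$ and $\hat{F}^{i}=-\gamma^{-2}E^{\top}\hat{P}^{i}$, whence $A-B\hat{L}^{i}-E\hat{F}^{i}=\hat{\overline{A}}^{i}$ and $\hat{Q}_{2}^{i}=\hat{Q}_{1}^{i}$. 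The plan is therefore to derive a Lipschitz-type inequality $\|\mathcal{D}\|_{F}\le k_{3}\|\Delta M^{i}\|_{F}$ with a constant $k_{3}$ depending only on $\delta_{0}$; the threshold can then be taken as $\delta_{1}^{1}:=\min\{d,k_{2}/k_{3}\}$, possibly further shrunk to meet the smallness assumptions used below.

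The first step is to translate $\Delta M^{i}$ into a perturbation of the gains. Applying Proposition~\ref{pro:1} with $M=R$, $N=B^{\top}\hat{P}^{i}$, $M+\Delta M=[\hat{M}^{i}]_{22}$ and $N+\Delta N=[\hat{M}^{i}]_{21}$ bounds $\Delta L^{i}:=\hat{L}^{i}-R^{-1}B^{\top}\hat{P}^{i}$ by
\begin{equation*}
\|\Delta L^{i}\|_{F}\le\|R^{-1}\|_{F}\bigl(\|\Delta M^{i}\|_{F}+\|[\hat{M}^{i}]_{22}^{-1}\|_{F}\|[\hat{M}^{i}]_{21}\|_{F}\|\Delta M^{i}\|_{F}\bigr).
\end{equation*}
A Neumann-series argument gives $\|[\hat{M}^{i}]_{22}^{-1}\|_{F}\le 2\|R^{-1}\|_{F}$ once $\|\Delta M^{i}\|_{F}\le(2\|R^{-1}\|_{F})^{-1}$, and $\|[\hat{M}^{i}]_{21}\|_{F}$ is uniformly bounded on $\mathcal{B}_{\delta_{0}}(P^{*})\times\{\|\Delta M^{i}\|_{F}\le d\}$ because $\hat{P}^{i}$ lies in a bounded set; the same treatment of the $(3,3)$ and $(3,1)$ blocks gives $\|\Delta F^{i}\|_{F}\le c_{F}\|\Delta M^{i}\|_{F}$, with $\Delta F^{i}:=\hat{F}^{i}+\gamma^{-2}E^{\top}\hat{P}^{i}$. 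Consequently $\|\Delta A^{i}\|_{F}:=\|(A-B\hat{L}^{i}-E\hat{F}^{i})-\hat{\overline{A}}^{i}\|_{F}=\|B\Delta L^{i}+E\Delta F^{i}\|_{F}\le c_{A}\|\Delta M^{i}\|_{F}$, and expanding $\hat{Q}_{2}^{i}-\hat{Q}_{1}^{i}$ in the shifts $\Delta L^{i},\Delta F^{i}$ yields $\|\hat{Q}_{2}^{i}-\hat{Q}_{1}^{i}\|_{F}\le c_{Q}\|\Delta M^{i}\|_{F}$ once $\|\Delta M^{i}\|_{F}$ is small, with every constant depending only on $\delta_{0}$ and the system data.

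Finally, set $X_{1}:=\hat{P}^{i+1}=(\mathscr{L}_{A-B\hat{L}^{i}-E\hat{F}^{i},A_{1}})^{-1}(-\hat{Q}_{2}^{i})$ and $X_{2}:=(\mathscr{L}_{\hat{\overline{A}}^{i},A_{1}})^{-1}(-\hat{Q}_{1}^{i})$, so that $\mathcal{D}=X_{1}-X_{2}$. Subtracting the two defining Lyapunov equations yields
\begin{equation*}
\mathscr{L}_{\hat{\overline{A}}^{i},A_{1}}(\mathcal{D})=\hat{Q}_{1}^{i}-\hat{Q}_{2}^{i}-X_{1}\Delta A^{i}-(\Delta A^{i})^{\top}X_{1}.
\end{equation*}
Theorem~\ref{th3}(i) together with Lemma~\ref{lem2} and the continuity of $\hat{P}^{i}\mapsto\mathcal{H}(\hat{\overline{A}}^{i},A_{1},H_{n})^{-1}$ on the compact set $\overline{\mathcal{B}}_{\delta_{0}}(P^{*})$ furnishes a uniform bound $C_{1}(\delta_{0})$ on $\|(\mathscr{L}_{\hat{\overline{A}}^{i},A_{1}})^{-1}\|$; Lemma~\ref{lem9} and the analogous continuity argument furnish a uniform bound $C_{2}(\delta_{0})$ on $\|X_{1}\|_{F}$, using that $\hat{Q}_{2}^{i}$ is uniformly bounded. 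Combining these with the previous paragraph gives $\|\mathcal{D}\|_{F}\le C_{1}(c_{Q}+2C_{2}c_{A})\|\Delta M^{i}\|_{F}=:k_{3}(\delta_{0})\|\Delta M^{i}\|_{F}$, and choosing $\delta_{1}^{1}$ as above completes the argument. The main obstacle is purely bookkeeping: making every constant uniform over $\mathcal{B}_{\delta_{0}}(P^{*})$, which rests on compactness of the ball and continuity of matrix inversion on the set of invertible matrices.
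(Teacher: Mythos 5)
Your proposal is correct and follows the same overall route as the paper's proof: both first convert $\Delta M^{i}$ into perturbations of the gains via Proposition~\ref{pro:1} applied blockwise (your bound on $\hat{L}^{i}-R^{-1}B^{\top}\hat{P}^{i}$ is exactly the paper's (\ref{eq211})--(\ref{eq212}), with your explicit Neumann-series bound replacing the paper's appeal to Lemma~\ref{lem9} and continuity for the invertibility and boundedness of $[\hat{M}^{i}]_{22}^{-1}$ and $[\hat{M}^{i}]_{33}^{-1}$), then propagate these into Lipschitz bounds on the closed-loop matrix and on $\hat{Q}_{2}^{i}-\hat{Q}_{1}^{i}$, and finally bound the difference of the two Lyapunov solutions using uniform-on-compacta bounds on the inverse Lyapunov operators. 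The one genuine difference is the last step: the paper vectorizes both solutions via Lemma~\ref{lem2} and applies Proposition~\ref{pro:1} once more with $M=\mathcal{H}\left(A-B\hat L^{i}-E\hat F^{i},A_{1},H_{n}\right)$ and $\Delta M=\Delta X_{i}$, whereas you subtract the two GLEs to obtain a single Lyapunov equation for $\mathcal{D}$ with residual $\hat{Q}_{1}^{i}-\hat{Q}_{2}^{i}-X_{1}\Delta A^{i}-\left(\Delta A^{i}\right)^{\top}X_{1}$ and then invert $\mathscr{L}_{\hat{\overline{A}}^{i},A_{1}}$. The two are equivalent first-order perturbation arguments; yours avoids the $\mathscr{H}$-representation machinery in this step and keeps everything at the level of the Lyapunov operator (at the cost of needing a uniform bound on $\|X_{1}\|_{F}$, which you correctly obtain from Lemma~\ref{lem9} and compactness), while the paper's version reuses Proposition~\ref{pro:1} so that only norms of the $\mathcal{H}$-matrices and their increments need to be controlled. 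Both yield a constant $k_{3}$ depending only on $\delta_{0}$ and the system data, and your choice $\delta_{1}^{1}=\min\{d,k_{2}/k_{3}\}$ matches the paper's concluding step.
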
	
\begin{proof}\rm
	For all $\hat{P}^{i} \in \mathcal{B}_{\delta_{0}}\left(P^{*}\right) $ and  $\left\|\Delta M^{i}\right\|_{F} \leq d, i \in \mathcal{Z}_{+}$, according to the continuity of the matrix norm, Proposition
	\ref{pro:1} and Lemma~\ref{lem9}, we have  
	\begin{equation}\label{eq211}
		\begin{array}{l} 
			\left\|R^{-1} B^{\top} \hat{P}^{i}-\hat{L}^{i}\right\|_{F}
			\leq\left\| \left[\overline{M}^{i}\right]_{22} ^{-1}\right\|_{F}\\ \left(\left\|\Delta M^{i}\right\|_{F}+\left\|\left[\hat{M}^{i}\right]_{22}^{-1}\right\|_F\left\|\left[\hat{M}^{i}\right]_{21}\right\|_F\left\|\Delta M^{i}\right\|_{F}\right) \\
			\leq k_{4}^1\left\|\Delta M^{i}\right\|_{F}
		\end{array}	
	\end{equation}
	and
	\begin{equation}\label{eq212}
		\begin{array}{l} 
			\left\|-\gamma^{-2}  E^{\top}  \hat{P}^{i}-\hat{F}^{i}\right\|_{F}
			\leq\left\| \left[\overline{M}^{i}\right]_{33} ^{-1}\right\|_{F}\\
			\left(\left\|\Delta M^{i}\right\|_{F}+\left\|\left[\hat{M}^{i}\right]_{33}^{-1}\right\|_F\left\|\left[\hat{M}^{i}\right]_{31}\right\|_F\left\|\Delta M^{i}\right\|_{F}\right) \\
			\leq k_{4}^2\left\|\Delta M^{i}\right\|_{F}
		\end{array}
	\end{equation}		
	for some $k_{4}^1\left(\delta_{0}, d\right)>0,k_{4}^2\left(\delta_{0}, d\right)>0$. Define 
	\begin{equation*}
		\begin{aligned}
			\Delta X_i= &	\mathcal{H}\left(\hat{\overline{A}}^{i},A_{1},H_{n}\right)-\mathcal{H}\left(A-B\hat L^{i}-E\hat F^{i},A_{1},H_{n}\right),\\
			\Delta Y_i =&	\operatorname{vecs}\left(\hat{Q}^{i}_2-\hat{Q}^{i}_1\right).
		\end{aligned}	
	\end{equation*}	
	Noting (\ref{eq211}) and (\ref{eq212}), it is easy to check that $\left\|\Delta X_i\right\|_{F} \leq k_{5}\left\|\Delta M^{i}\right\|_{F}, \left\|\Delta Y_i\right\|_{2} \leq k_{6}\left\|\Delta M^{i}\right\|_{F}	$ for some $k_{5}\left(\delta_{0}, d\right)>0 $ and  $k_{6}\left(\delta_{0}, d\right)>0$. Then  according to the continuity of the matrix norm, Lemma~\ref{lem2} and Proposition
	\ref{pro:1}, one has 
	\begin{equation*}
		\begin{array}{l} 	
			\left\|\mathcal{D}\left( \overline{M}^{i+1}, \Delta M^{i+1}\right)\right\|_{F}\\ 
			\leq \left\|\mathcal{H}\left(A-B\hat L^{i}-E\hat F^{i},A_{1},H_{n}\right)^{-1}\right\|_{F}\left\|\Delta M^{i}\right\|_{F}\\
			\left\lbrace k_{6}+k_{5}\left\|\mathcal{H}\left(\hat{\overline{A}}^{i},A_{1},H_{n}\right)^{-1}\right\|_{F}\right.\\
			\left. \left[ \left\|-\hat P^{i} B R^{-1} B^{\top} \hat P^{i}\right\|_{F} 
			+\left\|\gamma^{-2} \hat P^{i}E E^{\top} \hat P^{i}\right\|_{F}\right] \right\rbrace 
			\\
			\leq k_{3}\left(\delta_{0}\right)\left\|\Delta M^{i}\right\|_{F}.
		\end{array}
	\end{equation*}	
	Taking  $0<\delta_{1}^{1} \leq d $ with  $k_{3} \delta_{1}^{1}<k_{2}$ completes the proof.	\qed
\end{proof}
Now we are in a position to prove Theorem~\ref{th6}.
\begin{proof}\rm Let  $k_{2}=(1-\varepsilon_{0}) \delta_{0}$  in Lemma~\ref{lem10} and  $\delta_{1}=\delta_{1}^{1}\left(\delta_{0},k_{2}\right)$. For any  $i \in \mathcal{Z}_{+}$, if  $\hat{P}^{i} \in \mathcal{B}_{\delta_{0}}\left(P^{*}\right)$, then
	\begin{align}
		&\left\|\hat{P}^{i+1}-P^{*}\right\|_{F}  \notag\\
		\leq& \left\|\left( \mathscr{L}_{\hat{\overline{A}}^{i},A_{1}}\right)^{-1} \left( -\hat{Q}^{i}_1\right) -P^{*}\right\|_{F}+\left\|\mathcal{D}\left(\overline{M}^{i}, \Delta M^{i}\right)\right\|_{F} \notag\\
		\leq& \varepsilon_{0}\left\|\hat{P}^{i}-P^{*}\right\|_{F}+k_{3}\left\|\Delta M^{i}\right\|_{F} \label{eq221}\\
		\leq &\varepsilon_{0}\left\|\hat{P}^{i}-P^{*}\right\|_{F}+k_{3}\|\Delta M\|_{l_{\infty}} \label{eq231}\\
		<	&\varepsilon_{0} \delta_{0}+k_{3} \delta_{1}<\varepsilon_{0} \delta_{0}+k_{2}=\delta_{0},\label{eq241}
	\end{align}	
	where (\ref{eq221}) and (\ref{eq241}) hold because of  Theorem~\ref{th3} and Lemma~\ref{lem10}. By induction,  (\ref{eq221}) --  (\ref{eq241}) hold for all  $i \in \mathbb{Z}_{+}$, therefore (\romannumeral1) in Theorem~\ref{th6} is proved.  As a result,  according to (\ref{eq221}), one has
	\begin{equation*}
		\begin{aligned}
			&\left\|\hat{P}^{i}-P^{*}\right\|_{F}\\
			\leq &\varepsilon_{0}^{2}\left\|\hat{P}^{i-2}-P^{*}\right\|_{F}+(\varepsilon_{0}+1) k_{3}\|\Delta M\|_{l_{\infty}} \\
			\leq& \cdots\\
			\leq& \varepsilon_{0}^{i}\left\|\hat{P}^{0}-P^{*}\right\|_{F}+\left(1+\cdots+\varepsilon_{0}^{i-1}\right) k_{3}\|\Delta M\|_{l_{\infty}} \\
			<&\varepsilon_{0}^{i}\left\|\hat{P}^{0}-P^{*}\right\|_{F}+\frac{k_{3}}{1-\varepsilon_{0}}\|\Delta M\|_{l_{\infty}},	
		\end{aligned}	
	\end{equation*}	
	which proves 	(\romannumeral2) in Theorem~\ref{th6}.
	As to	(\romannumeral3) in Theorem~\ref{th6}, for any  $\varepsilon>0 $, there exists a  $i_{1} \in \mathcal{Z}_{+}$ such that $ \sup \left\{\left\|\Delta M^{i}\right\|_{F}\right\}_{i=i_{1}}^{\infty}<\beta^{-1}(\varepsilon / 2)$. Let  $i_{2} \geq i_{1}$, for $ k \geq i_{2}$, because $\hat{P}^{i}$ is bounded,  in accordance with (\romannumeral2) in Theorem~\ref{th6}, we have
	\begin{equation*}
		\begin{aligned}
			\left\|\hat{P}^{i}-P^{*}\right\|_{F} & \leq \alpha\left(\left\|\hat{P}^{i_{2}}-P^{*}\right\|_{F}, i-i_{2}\right)+\varepsilon / 2 \\
			& \leq \alpha \left( k_{7}, i-i_{2}\right)+\varepsilon / 2,
		\end{aligned}
	\end{equation*}
	for some $k_{7}\left(\delta_{0}, d\right)>0 $.  Because $ \lim _{i \rightarrow \infty} \alpha\left(k_{7}, i-i_{2}\right)=0$, there is a $ i_{3} \geq   i_{2}$ such that $ \alpha\left(k_{7}, i-i_{2}\right)<\varepsilon / 2$ for all $i \geq i_{3}$, which completes the proof.\qed
\end{proof}
%\section{Acknowledgments}
\bibliographystyle{apacite}   
\bibliography{gjref2}           % and a bib file to 
\end{document}